\newcommand{\uu}{\underline}
\newcommand{\ol}{\overline}
\newcommand{\mc}{\mathcal}
\newcommand{\wh}{\widehat}
\newcommand{\mf}{\mathfrak}
\newcommand{\ZZ}{\mathbb{Z}}
\newcommand{\PP}{\mathbb{P}}
\newcommand{\QQ}{\mathbb{Q}}
\newcommand{\FF}{\mathbb{F}}
\DeclareMathOperator{\ord}{ord}
\DeclareMathOperator{\rank}{rank}
\DeclareMathOperator{\Gal}{Gal}
\DeclareMathOperator{\Proj}{Proj}
\DeclareMathOperator{\Ext}{Ext}
\newtheorem{Theorem}{Theorem}[section]
\newtheorem{Lemma}[Theorem]{Lemma}
\newtheorem{Corollary}[Theorem]{Corollary}
\newtheorem{Definition}[Theorem]{Definition}
\newtheorem{Question}[Theorem]{Question}
\theoremstyle{remark}
\newtheorem{Remark}[Theorem]{Remark}
\numberwithin{equation}{section}
\begin{document}

\title[On $p$-degree of elliptic curves]{On $p$-degree of elliptic curves}
\author[J. Garnek]{J\k{e}drzej Garnek}

\begin{abstract}
  In this note we investigate the $p$-degree function of an elliptic curve $E/\QQ_p$. The $p$-degree measures the least complexity of a non-zero $p$-torsion point on $E$. We prove some properties of this function and compute it explicitly in some special cases.
\end{abstract}

\maketitle

\section{Introduction}
Let $p \neq 2, 3$ be a prime. In this paper we define the \textbf{$p$-degree} of an elliptic curve $E$
over the field $\QQ_p$ to be:
\begin{equation*}
  d_p(E) = \min \{[\QQ_p(P):\QQ_p]: P \in E[p], \, P \neq \mc O\},
\end{equation*}
where $\QQ_p(P)$ denotes the field obtained by adjoining to $\QQ_p$ the coordinates of a point $P \in E(\ol{\QQ}_p)$. It turns out that the $p$-degree of an elliptic curve with good reduction
depends only on the reduction $\bmod \, p^2$:
\begin{Theorem}\label{t:dp_depends_on_reduction}
  If the elliptic curves $E_1, E_2/\QQ_p$ have good reduction and their reductions to $\ZZ/p^2$ are isomorphic then $d_p(E_1) = d_p(E_2)$. 
\end{Theorem}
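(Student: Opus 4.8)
The plan is to reduce the statement to a question about the mod-$p$ Galois representation and then to control that representation by $p$-adic Hodge theory. For the first reduction, note that for a nonzero point $P\in E[p]$ the field $\QQ_p(P)$ is the fixed field of the stabiliser of $P$ in $G_{\QQ_p}:=\Gal(\ol{\QQ}_p/\QQ_p)$, so $[\QQ_p(P):\QQ_p]$ equals the size of the $G_{\QQ_p}$-orbit of $P$. Writing $\bar\rho_E\colon G_{\QQ_p}\to GL(E[p])\cong GL_2(\FF_p)$ for the mod-$p$ representation and $\bar G:=\bar\rho_E(G_{\QQ_p})$ for its image, we get
\[
  d_p(E)=\min\{\,|\bar G\cdot v|\ :\ 0\neq v\in\FF_p^2\,\},
\]
which depends only on the conjugacy class of $\bar G$ in $GL_2(\FF_p)$. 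Hence it suffices to show that an isomorphism $E_1\bmod p^2\cong E_2\bmod p^2$ induces an isomorphism of $G_{\QQ_p}$-modules $E_1[p]\cong E_2[p]$; equivalently, that the finite flat group scheme $\mc E[p]$ over $\ZZ_p$ — with $\mc E/\ZZ_p$ the smooth model of $E$ — is a functor of $\mc E\bmod p^2$.

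For the main step I would invoke Fontaine--Laffaille theory, which applies because $p\geq 5$ and, by good reduction, $T_pE$ is crystalline with Hodge--Tate weights in $\{0,1\}\subseteq\{0,\dots,p-2\}$. Under the Fontaine--Laffaille correspondence $\bar\rho_E$ is recovered from the filtered Frobenius module attached to $\mc E[p]$, which one builds out of $H^1_{\mathrm{dR}}$ of the model together with its Hodge filtration and its divided Frobenius. The crucial point is that all of these data already live over $\ZZ/p^2$: the de Rham (crystalline) cohomology with its Frobenius is functorial in $\mc E\bmod p^2$, and the divided Frobenius $\varphi_1=\tfrac1p\varphi$ on the Hodge-filtered line, once reduced modulo $p$, requires knowing $\varphi$ only modulo $p^2$. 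I expect this to be the heart of the matter: one has to check carefully that the Fontaine--Laffaille (equivalently, crystalline Dieudonn\'e) datum of $\mc E[p]$ genuinely factors through $\mc E\bmod p^2$ and involves nothing finer. It is worth stressing that the $p$-division polynomial of $E$ does depend on all of $E$, and polynomials congruent modulo $p^2$ can have wildly different factorisations, so the economy of information has to come from the group-scheme structure rather than from naive congruences.

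If one prefers to bypass Fontaine--Laffaille, the same conclusion follows by separating the two reduction types. In the supersingular case the connected--\'etale sequence is trivial and, because $p\geq 5$, the restriction $\bar\rho_E|_{I_{\QQ_p}}$ is tame and given up to isomorphism by the two fundamental characters of level $2$; hence $\bar G$ contains, up to conjugacy, the nonsplit Cartan subgroup, which already acts transitively on $\FF_p^2\setminus\{0\}$, so $d_p(E)=p^2-1$ independently of $E$. In the ordinary case the connected--\'etale sequence
\[
  0\longrightarrow \mc E[p]^{0}\longrightarrow \mc E[p]\longrightarrow \mc E[p]^{\mathrm{et}}\longrightarrow 0
\]
has outer terms $\mu_p\otimes\psi^{-1}$ and $\psi$ for an unramified character $\psi$ depending only on $\widetilde E$; what remains is the extension class, which by Serre--Tate theory is governed by the Serre--Tate parameter $q\in 1+p\ZZ_p$, and the reduction $\mc E\bmod p^2$ pins down $q$ modulo $(1+p\ZZ_p)^p=1+p^2\ZZ_p$, hence pins down the class. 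Either route yields that $d_p(E)$ is determined by $\mc E\bmod p^2$, which is the assertion of the theorem.
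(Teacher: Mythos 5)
Your strategy is sound but genuinely different from the paper's, and it aims at a stronger conclusion: you argue that the finite flat group scheme $\mc E[p]/\ZZ_p$ (hence the Galois module $E[p]$) is determined by $\mc E \bmod p^2$, whereas the paper only needs, and only proves, that the numbers $\rank_p E(K)$ are so determined. The paper's route is elementary: writing $R_j=R/\mf m^j$ with $j=e+\lceil e/(p-1)\rceil\le 2e$, so that $R_j$ is a quotient of $R/p^2R$, the formal-group exact sequence $0\to\wh E(\mf m_j^i)\to E(R_j)\to E_{R_i}(R_i)\to 0$ yields $\rank_p E(K)=\rank_p E_{R_j}(R_j)-[K:\QQ_p]$, and the right-hand side visibly depends only on $E_{\ZZ/p^2}$; since $d_p(E)=\min\{[K:\QQ_p]:E(K)[p]\neq 0\}$, the theorem follows with no input from $p$-adic Hodge theory or Serre--Tate moduli, and uniformly over all (possibly ramified) $K$. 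Your approach buys more (the full isomorphism class of $\bar\rho_E|_{G_{\QQ_p}}$ from the reduction mod $p^2$) at the cost of substantially heavier machinery.

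The one point you must still nail down is the one you yourself flag: that the Fontaine--Laffaille (crystalline Dieudonn\'e) datum of $\mc E[p]$ really factors through $\mc E\bmod p^2$. This is true --- $\varphi$ on $H^1_{\mathrm{cris}}$ over $\ZZ/p^2$ depends only on the special fibre, the Hodge filtration over $\ZZ/p^2$ only on $\mc E\bmod p^2$, and $\varphi_1=\varphi/p$ on the filtered part mod $p$ needs exactly these --- but as written it is an assertion, not a proof, and it is the entire content of the theorem in this approach. Your fallback argument is closer to complete: the supersingular case via Serre's fundamental characters of level $2$ is correct (and reproves Theorem \ref{t:supersingular} over $\QQ_p$), and in the ordinary case the identification of the extension class with the Serre--Tate parameter in $\ZZ_p^\times/(\ZZ_p^\times)^p$, together with $(1+p\ZZ_p)^p=1+p^2\ZZ_p$ for $p$ odd, does pin down the class; there you should still say a word about descending from $\wh{\ZZ_p^{un}}$ (where the Serre--Tate coordinate lives) back to $\ZZ_p$, since the outer terms carry an unramified twist. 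With either of these gaps filled, your argument is a valid, if less economical, proof.
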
\noindent
In particular, curves with low $p$-degree correspond to the canonical lift ${\bmod \, p^2}$ and in this case we can derive an explicit formula for the $p$-degree. Let $\ord_p x$ denote the order of $x$ in the group $(\ZZ/p)^{\times}$ and $a_p(E)$ be the trace of Frobenius endomorphism for an elliptic curve $E$ defined over a finite field.
\begin{Theorem} \label{t:implications}
  Let $E/\QQ_p$ be an elliptic curve with good reduction. Let us consider the following statements:
  \begin{enumerate}[(1)]
    \item $d_p(E) < p-1$,
    \item $E_{\FF_p}$ is ordinary and $E_{\ZZ/p^2}$ is a canonical lift of $E_{\FF_p}$,
    \item $E(\QQ_p^{un})[p] \neq 0$, where $\QQ_p^{un}$ is the maximal unramified extension of $\QQ_p$,
    \item $E_{\FF_p}$ is ordinary and $d_p(E) = \ord_p a_p(E)$.
  \end{enumerate}
Then $(1)$ implies $(2)$, $(2)$ and $(3)$ are equivalent, and $(3)$ implies $(4)$.
\end{Theorem}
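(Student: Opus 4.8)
The plan is to run everything through the mod-$p$ Galois module $M := E[p](\ol{\QQ}_p)$, a two-dimensional $\FF_p$-vector space with $G_{\QQ_p}$-action, so that $d_p(E) = \min\{\,|G_{\QQ_p}\cdot P| : P \in M\setminus\{0\}\,\}$. I would prove $(1)\Rightarrow(3)$, $(2)\Leftrightarrow(3)$ and $(3)\Rightarrow(4)$, so that $(1)\Rightarrow(2)$ follows. The first move is to rule out supersingular reduction for $(1)$ and $(3)$: if $E_{\FF_p}$ is supersingular then $M = \wh E[p]$, the formal group $\wh E$ has height $2$, so in $[p]_{\wh E}(z)$ the coefficient of $z^{p^2}$ is a unit while those of $z^2,\dots,z^{p^2-1}$ are divisible by $p$; hence the Newton polygon of $[p]_{\wh E}(z)/z$ is a single segment and every nonzero $P\in M$ has $v(z(P)) = 1/(p^2-1)$. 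Thus $\QQ_p(P)/\QQ_p$ is ramified of degree $\geq p^2-1 > p-1$, contradicting $(1)$, and $E(\QQ_p^{un})[p] = 0$ since $1/(p^2-1)$ is not in the value group of $\QQ_p^{un}$. From now on $E_{\FF_p}$ is ordinary, and the connected–étale filtration of $E[p^\infty]$ over $\ZZ_p$ yields a $G_{\QQ_p}$-stable line $L\subset M$ (the generic fibre of $\wh E[p]$), with $G_{\QQ_p}$ acting on $L$ by a character $\psi$ and on the unramified quotient $M/L$ by a character whose value on Frobenius is $\ol a_p := a_p(E)\bmod p \in\FF_p^\times$. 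By the Weil pairing the product of these two characters is the mod-$p$ cyclotomic character, so $\psi|_{I_p}$ is the tame character of order $p-1$; in particular it is surjective onto $\FF_p^\times$.

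For $(1)\Rightarrow(3)$, choose $P$ with $[\QQ_p(P):\QQ_p] = d_p(E) < p-1$. A nonzero vector of $L$ generates an extension of degree $\geq p-1$ (since $\psi|_{I_p}$ is surjective), so $P\notin L$; hence its image $\ol P\in M/L$ is nonzero, $K_0 := \QQ_p(\ol P)$ is the unramified extension of $\QQ_p$ of degree $\ord_p a_p$, and $K_0\subseteq\QQ_p(P)$. As $G_{K_0}$ acts trivially on $M/L$, the map $g\mapsto gP-P$ is a cocycle $\ell\colon G_{K_0}\to L$, and $G_{K_0}$ acts on the coset $P+L$ — an affine line over $\FF_p$ — through a group $H$ of affine transformations $t\mapsto at+b$ whose linear parts run over all of $\FF_p^\times$ (via $\psi$). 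If $H$ contained a nontrivial translation it would contain every translation, so $P$ would have $H$-orbit of size $p$ and $[\QQ_p(P):\QQ_p] = p\cdot\ord_p a_p > p-1$, a contradiction; hence $H$ injects into $\FF_p^\times$, has order prime to $p$, and therefore fixes a point of the affine line. Equivalently $\ell$ is a coboundary, so $M^{G_{K_0}}$ contains a vector outside $L$; since $K_0/\QQ_p$ is unramified, $G_{K_0}$ is normal in $G_{\QQ_p}$ and contains $I_p$, so $M^{G_{K_0}}$ is a $G_{\QQ_p}$-stable unramified line complementary to $L$. In particular $M^{I_p} = E(\QQ_p^{un})[p]\neq 0$, which is $(3)$.

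For $(2)\Leftrightarrow(3)$ I would invoke Serre–Tate deformation theory. Being ordinary, $E_{\ZZ/p^2}$ is a canonical lift precisely when the Serre–Tate coordinate of $E$ is $\equiv 1\pmod{p^2}$, which — under the identification of that coordinate with the class of the connected–étale extension $0\to\mu_p\to E[p]\to\ZZ/p\to 0$ over $W := W(\ol\FF_p) = \mc O_{\QQ_p^{un}}$ in $\Ext^1_W(\ZZ/p,\mu_p)\cong W^\times/(W^\times)^p\cong(1+pW)/(1+p^2W)$ — amounts to this extension of group schemes being split over $W$. Since $p\geq 5$, the base $W$ has absolute ramification $1 < p-1$, so the generic-fibre functor on finite flat group schemes killed by $p$ over $W$ is fully faithful and $\Ext^1_W(\ZZ/p,\mu_p)$ injects into $H^1(I_p,\mu_p)$; hence the extension splits over $W$ iff it splits as a sequence of $I_p$-modules iff (using $\mu_p^{I_p} = L^{I_p} = 0$) $M^{I_p}\neq 0$. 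I expect this step to be the main obstacle: the Galois-module arguments for $(1)\Rightarrow(3)$ and $(3)\Rightarrow(4)$ are elementary once the ordinary structure is in place, but translating a canonical lift mod $p^2$ into a statement about $p$-torsion genuinely uses the deformation theory, together with the $\Ext$-computations for order-$p$ group schemes over $W$, $\ZZ/p^2$ and $\QQ_p^{un}$ and the faithfulness needed to descend a splitting from the generic fibre.

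Finally, for $(3)\Rightarrow(4)$: $(3)$ forces $E_{\FF_p}$ ordinary, and since $L^{I_p} = 0$ it gives $M = L\oplus N$ with $N := M^{I_p}\cong M/L$, so $G_{\QQ_p}$ acts on $N$ unramified with Frobenius acting by $\ol a_p$. A direct orbit count then finishes: a nonzero vector of $N$ has $G_{\QQ_p}$-orbit of size $\ord_p a_p$; a nonzero vector of $L$ has orbit of size $|\psi(G_{\QQ_p})| = p-1$; a vector with nonzero components in both $L$ and $N$ has orbit of size $\geq p-1$ (project onto the $L$-component). Since $\ord_p a_p$ divides $p-1$, the minimum is $\ord_p a_p$, so $d_p(E) = \ord_p a_p$, which together with ordinarity is $(4)$.
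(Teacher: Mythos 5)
Your proposal is correct, but it follows a genuinely different route from the paper. The paper's engine is Lemma~\ref{l:main_lemma}, a computation of $\rank_p E_{R_j}(R_j)$ over the finite quotients $R_j$ of the ring of integers via the formal-group filtration; Theorem~\ref{t:implications} is then deduced from an auxiliary equivalence (Theorem~\ref{t:equivalence}) concerning $p$-torsion over arbitrary extensions $K/\QQ_p$ of ramification $e<p-1$, and the implication $(3)\Rightarrow(4)$ is outsourced to \cite[Lemma 4.3]{WestonDavid}. You instead work entirely on the generic fibre: your engine is the ordinary filtration $0\to L\to E[p]\to E[p]/L\to 0$ of Galois modules together with the Weil pairing (forcing $\psi|_{I_p}$ to be the full tame character), and every implication becomes an orbit/stabilizer count; in particular your affine-action argument gives $(1)\Rightarrow(3)$ directly without passing through the canonical lift, and you reprove the Weston--David orbit computation rather than citing it. Both routes must confront Serre--Tate theory for $(2)\Leftrightarrow(3)$: the paper proves an effective Serre--Tate theorem (Theorem~\ref{t:effective_st}) by computing $\Ext^1(\uu{\ZZ/p},\mu_p)$ over the finite rings $A_j$ and then descending with the $j$-invariant classification (Lemma~\ref{l:type_classification}), whereas you compute the same $\Ext$ group over $W(\ol{\FF}_p)$ and pass to the generic fibre via the injectivity of $\Ext^1_W(\uu{\ZZ/p},\mu_p)\to H^1(I_p,\mu_p)$ (available since $e=1<p-1$), translating ``canonical lift mod $p^2$'' directly into $E[p]^{I_p}\neq 0$; this is indeed, as you say, the one step where real deformation-theoretic input is unavoidable, and you should make explicit the small descent needed to compare splitting over $W_2(\ol{\FF}_p)$ with the canonical lift over $\ZZ/p^2$. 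Your supersingular exclusion by the Newton polygon of $[p]_{\wh E}$ is exactly the paper's proof of Theorem~\ref{t:supersingular}, which the paper invokes more softly inside Theorem~\ref{t:equivalence} (only the injection $E(K)[p]\hookrightarrow E_k(k)[p]$ for $e<p-1$ is used there). The trade-off: the paper's integral rank lemma is reused to prove Theorem~\ref{t:dp_depends_on_reduction} and yields the stronger statement of Theorem~\ref{t:equivalence} for all low-ramification $K$, while your Galois-module approach is more self-contained and delivers the formula $d_p(E)=\ord_p a_p(E)$ without external citation.
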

Failures of complementary implications are discussed in Remark~\ref{r:other_implications}.
A~result similar to Theorem \ref{t:implications} was partially stated already in~\cite{WestonDavid}. Our proof of Theorem~\ref{t:implications} uses classification of elliptic curves over finite rings by the j-invariant and an~effective version of Serre-Tate theorem, which we prove.

Investigating the $p$-degree is especially interesting when $E$ is a fixed curve over the field of rational numbers $\QQ$ and $p$ varies over primes. In particular it is natural to ask about the asymptotic behaviour of the $p$-degree:
\begin{Question}\label{q:asymptotic_behaviour}
  Does the $p$-degree of a fixed elliptic curve tend to infinity as $p$ becomes large?
\end{Question}
The authors of \cite{WestonDavid} predict that for an elliptic curve without complex multiplication
the answer to Question \ref{q:asymptotic_behaviour} is affirmative. They justify this conjecture by a simple heuristics and an averaging result. The heuristics given by David and Weston doesn't work in the case of elliptic curves with CM. We try to verify Question \ref{q:asymptotic_behaviour} in this case. The first ingredient is Theorem~\ref{t:implications}, which yields an explicit formula in the case when $p$ is ordinary. For the remaining primes we apply the following result:
\begin{Theorem} \label{t:supersingular}
  Let $E/\QQ_p$ be an elliptic curve with good supersingular reduction. Then $d_p(E) = p^2 - 1$.
\end{Theorem}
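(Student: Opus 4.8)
The plan is to reduce the theorem to a Newton-polygon computation on the formal group. First, note that the upper bound $d_p(E) \le p^2 - 1$ holds for \emph{every} elliptic curve over $\QQ_p$: if $P \in E[p]$ is nonzero, then $[\QQ_p(P):\QQ_p]$ equals the size of the $\Gal(\ol\QQ_p/\QQ_p)$-orbit of $P$, which is contained in the $(p^2-1)$-element set $E[p]\setminus\{\mc O\}$. So the content of the theorem is the reverse inequality: \emph{every} nonzero $P \in E[p]$ satisfies $[\QQ_p(P):\QQ_p] \ge p^2 - 1$.

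To prove this I would fix a Weierstrass model of $E$ over $\ZZ_p$ with good reduction, let $\wh E/\ZZ_p$ be its formal group with standard parameter $z = -x/y$, and use that supersingular reduction forces all $p$-torsion into the formal group: $E_{\FF_p}$ has no nonzero $p$-torsion over $\ol\FF_p$, so the reduction map kills $E[p]$, i.e.\ $E[p]\setminus\{\mc O\} \subseteq \wh E(\mf m)$, where $\mf m$ is the maximal ideal of the valuation ring of $\ol\QQ_p$. Thus for nonzero $P \in E[p]$ the element $z(P) \in \mf m \cap \QQ_p(P)$ is a nonzero root of $[p]_{\wh E}(T) \in \ZZ_p[[T]]$. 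The key step is the Newton polygon of $f(T) := [p]_{\wh E}(T)/T$: since $[p]_{\wh E}(T) = pT + O(T^2)$ we have $f(0) = p$, while the classical equivalence ``supersingular reduction $\iff$ $\wh E$ has height $2$'' gives $[p]_{\wh E}(T) \equiv u\,T^{p^2} + (\text{higher order}) \pmod p$ with $u \in \ZZ_p^\times$, so the coefficients of $T, \dots, T^{p^2-1}$ in $[p]_{\wh E}$ are divisible by $p$ and that of $T^{p^2}$ is a unit. Hence, normalizing $v_p(p)=1$, the Newton polygon of $f$ is the single segment from $(0,1)$ to $(p^2-1,0)$, which forces every root of $f$ in $\mf m$ to have valuation $\tfrac{1}{p^2-1}$; in particular $v_p(z(P)) = \tfrac{1}{p^2-1}$ for all nonzero $P \in E[p]$.

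To finish, $\QQ_p(P)$ contains $z(P)$, an element of valuation $\tfrac{1}{p^2-1}$, so the ramification index of $\QQ_p(P)/\QQ_p$ is divisible by $p^2-1$ and therefore $[\QQ_p(P):\QQ_p] \ge p^2-1$; combined with the upper bound, $d_p(E) = p^2-1$. I expect the Newton-polygon step to be the only real obstacle — concretely, invoking the height-$2$ characterization of supersingularity and extracting the congruence for $[p]_{\wh E} \bmod p$ in the correct normalization — after which the valuation and ramification arguments are routine.
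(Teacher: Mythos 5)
Your proposal is correct and follows essentially the same route as the paper: supersingularity puts all of $E[p]\setminus\{\mc O\}$ into the formal group, and the height-$2$ shape of $[p]_{\wh E}(T)$ (unit times $T^{p^2}$ modulo $p$) forces any nonzero torsion parameter to have valuation $\tfrac{1}{p^2-1}$, hence ramification at least $p^2-1$. The paper extracts this by a direct valuation comparison from $[p](T)=pf(T)+g(T^{p^2})$ rather than a Newton polygon, and leaves implicit the upper bound $d_p(E)\le p^2-1$ via Galois orbits, which you rightly spell out.
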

Combining methods mentioned above we get explicit formulas for the $p$\nobreakdash-degree of elliptic curves with complex multiplication by Gauss and by Eisenstein integers. Let $E_{A, B}$ denote the elliptic curve given by the Weierstrass equation $y^2 = x^3 + Ax + B$.
\begin{Theorem} \label{t:explicit_cm}
  Let $D \in \ZZ$, $D \neq 0$ and assume that $p \nmid 6 D$. Then:
  \begin{equation*}
    d_p(E_{D, 0}) =
    \begin{cases}
      \ord_p \left( (-D)^{\frac{p-1}{4}} \cdot (2s) \right), & \text{ for } p \equiv 1 \pmod 4,\\
      p^2 - 1, & \text{ for } p \equiv 3 \pmod 4,
    \end{cases}
  \end{equation*}
  \begin{equation*}
    d_p(E_{0, D}) =
    \begin{cases}       
      \ord_p \left( - (4D)^{\frac{p-1}{6}} \cdot A \right), & \text{ for } p \equiv 1 \pmod 3,\\
      p^2 - 1, & \text{ for } p \equiv 2 \pmod 3,
    \end{cases}
  \end{equation*} 
  where:
  \begin{itemize}
    \item $s$ is defined for $p \equiv 1 \pmod 4$ by the equation
      \begin{equation*}
       p = s^2 + t^2
      \end{equation*}
      and conditions $2 \nmid s$ and $s + t \equiv 1 \pmod 4$,
    \item $A$ is defined for $p \equiv 1 \pmod 3$ by the equation
      \begin{equation*}
       4p = A^2 + 3B^2
      \end{equation*}
      and conditions $A \equiv 1 \pmod 3$, $3|B$.
  \end{itemize}
\end{Theorem}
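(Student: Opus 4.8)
The plan is to split the primes $p \nmid 6D$ into two classes according to the reduction type of the CM curve: those where the reduction is supersingular, handled directly by Theorem~\ref{t:supersingular}, and those where it is ordinary, handled by Theorem~\ref{t:implications} together with classical point counts. Recall that $E_{D,0}$ has complex multiplication by $\mc O = \ZZ[i]$ (via $(x,y) \mapsto (-x, iy)$) and $E_{0,D}$ by $\mc O = \ZZ[\omega]$ with $\omega = e^{2\pi i/3}$ (via $(x,y)\mapsto (\omega x, y)$). The prime $p$ is inert in $\QQ(i)$ exactly when $p \equiv 3 \pmod 4$, and then $E_{D,0}$ has supersingular reduction; similarly $E_{0,D}$ has supersingular reduction exactly when $p \equiv 2 \pmod 3$. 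In those two cases Theorem~\ref{t:supersingular} already gives $d_p = p^2 - 1$, which is the asserted value. So it remains to treat $p \equiv 1 \pmod 4$ for $E_{D,0}$ and $p \equiv 1 \pmod 3$ for $E_{0,D}$, where $p$ splits in the CM field $K$ and the reduction $E_{\FF_p}$ is ordinary.

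The second step is to check that in these ordinary cases $E_{\ZZ/p^2}$ is a canonical lift. Since $-1$ (resp. $-3$) is a square modulo $p$ in the relevant range, $K$ embeds into $\QQ_p$, so the CM endomorphisms are already defined over $\QQ_p$; because $E$ has good reduction they extend to the smooth model over $\ZZ_p$, and reduction yields an injection $\mc O = \mathrm{End}(E_{\ZZ_p}) \hookrightarrow \mathrm{End}(E_{\FF_p})$. As $E_{\FF_p}$ is ordinary, $\mathrm{End}(E_{\FF_p})$ is an order in an imaginary quadratic field, and since it contains $\mc O$ it must be an order in $K$ containing the maximal order $\mc O = \mc O_K$, hence equal to $\mc O_K$. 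Thus reduction induces an isomorphism on endomorphism rings, which is precisely the defining property of the Serre--Tate canonical lift, so condition~$(2)$ of Theorem~\ref{t:implications} holds. By the implications $(2)\Rightarrow(3)\Rightarrow(4)$ we get $d_p(E) = \ord_p a_p(E_{\FF_p})$, and the theorem is reduced to computing $a_p(E_{\FF_p}) \bmod p$.

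For the last step I would use the classical evaluation of these point counts by Jacobi sums. Over $\FF_p$ the curve $E_{D,0}$ is the quartic twist of $y^2 = x^3 - x$ by $-D$, and the number of its points is governed by the quartic residue symbol of $-D$ against the Jacobi sum attached to a quartic character of $\FF_p$; the normalization $2 \nmid s$, $s + t \equiv 1 \pmod 4$ in $p = s^2 + t^2$ pins down that Jacobi sum as $2s$, giving $a_p(E_{D,0}) \equiv (-D)^{(p-1)/4} \cdot 2s \pmod p$. Similarly $E_{0,D}$ is a sextic twist of $y^2 = x^3 + 16$ (the curve $y^2 = x^3 + 16$ being the natural base, which is what produces the factor $4D$ rather than $D$: indeed $(D/16)^{(p-1)/6} \equiv (4D)^{(p-1)/6} \pmod p$), and the sextic-character computation, with $4p = A^2 + 3B^2$, $A \equiv 1 \pmod 3$, $3\mid B$ used to normalize the relevant Jacobi sum as $-A$, yields $a_p(E_{0,D}) \equiv -(4D)^{(p-1)/6} \cdot A \pmod p$. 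Applying $\ord_p$ to both congruences produces the two displayed formulas.

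The main obstacle is the sign bookkeeping in this final step. Because $\ord_p(x) \neq \ord_p(-x)$ in general, it is not enough to know $a_p$ up to sign: one must fix the generator of the prime of $\mc O$ above $p$ (equivalently, the normalization of the Jacobi sum) and track the normalization of the quartic/sextic residue symbol under twisting consistently, so that the congruences above hold exactly with the displayed signs. This is precisely the role of the conditions $s + t \equiv 1 \pmod 4$ and $A \equiv 1 \pmod 3$, $3 \mid B$, and the reason the quantities $-D$ and $4D$ (not $D$) appear; confirming that these normalizations yield exactly the stated expressions is the only delicate point, the rest being formal.
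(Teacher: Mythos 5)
Your proposal is correct and follows essentially the same route as the paper: the inert/supersingular primes are dispatched by Theorem~\ref{t:supersingular}, the split/ordinary primes by observing that the CM curve is the canonical lift and invoking Theorem~\ref{t:implications} to get $d_p(E)=\ord_p a_p(E)$, and the trace is then evaluated by the classical Jacobi-sum formulas. The normalization issue you flag as the delicate point is exactly what the paper resolves by citing Theorems~18.4 and~18.5 of Ireland--Rosen, which give $a_p$ with precisely the stated conventions ($\pi=s+it$, $\pi\equiv 1\pmod{2+2i}$, $\ol{\pi}\equiv 2s\pmod{\pi}$, and the analogous sextic normalization).
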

Theorem \ref{t:explicit_cm} and results of Cosgrave and Dilcher from \cite{CosgraveDilcher} allow us to show that if a certain recurrence sequence contains infinitely many primes, then Question \ref{q:asymptotic_behaviour} has negative answer for elliptic curves with complex multiplication by $\ZZ[i]$ (cf. Corollary~\ref{c:recurrence}).

Finally we compute the $p$-degree for elliptic curves with multiplicative reduction, by reducing the problem to an~investigation of the Tate curve (cf. Theorem~\ref{t:multiplicative}).\\ \\
\textbf{Outline of the paper.} Section \ref{s:preliminaries} provides a quick overview of facts related to elliptic curves over rings, inculding a classification of elliptic curves over complete rings by their $j$-invariant and an effective version of Serre-Tate theorem. In Section \ref{s:low_p_degree} we prove Theorems \ref{t:dp_depends_on_reduction} and \ref{t:implications} using the theory of formal groups applied to elliptic curves over finite rings. Finally, in the last section we investigate curves with good supersingular reduction, complex multiplication and bad multiplicative reduction. \\ \\
\textbf{Acknowledgements.} The author would like to express his sincere thanks to Wojciech Gajda for suggesting this problem and for many stimulating discussions. The author also thanks Bartosz Naskr\k{e}cki for valuable comments. The author was supported by NCN research grant UMO-2014/15/B/ST1/00128, the Scholarship of Jan Kulczyk for Graduate Students and by the doctoral scholarship of Adam Mickiewicz University.

\section{Preliminaries}\label{s:preliminaries}
Let $R$ be a commutative unital ring with trivial Picard group (e.g., a local or a finite ring), satisfying $6 \in R^{\times}$. Any elliptic curve over $R$ (as defined in~\cite{KatzMazur}) is isomorphic to a projective scheme of the form:
\begin{equation}\label{e:weierstrass_scheme}
  E_{A, B} := \Proj(R[x,y,z]/(y^2 z - x^3 - Ax z^2 - B z^3)),
\end{equation}
for some $A, B \in R$, satisfying: $\Delta(E_{A, B}) := -16 \cdot (4A^3 + 27 B^2) \in R^{\times}$.\\
Moreover, $E_{A, B} \cong E_{a, b}$ if and only if
\begin{equation}\label{e:coefficients}
  A = u^4 \cdot a, \quad B = u^6 \cdot b \qquad \text{ for some } u \in R^{\times}.
\end{equation}
Note that $R$-rational points of $\PP^n$ are in the correspondence with the points of the "naive" projective space:
\begin{equation*}
  \PP^n(R)_{naive} = \{(x_0, \ldots, x_n) \in R^{n+1}: x_0 R + \ldots + x_n R = R\} \bigg/ \sim,
\end{equation*}
where $(x_0, \ldots, x_n) \sim (y_0, \ldots, y_n)$ if and only if $x_i = u \cdot y_i$ for some $u \in R^{\times}$ and all $i$. Therefore $R$-rational points of the Weierstrass curve (\ref{e:weierstrass_scheme}) can be identified with elements of $\PP^2(R)_{naive}$ that satisfy the Weierstrass equation. \\ \\
\noindent
From now on we use the following notation:
\begin{itemize}
  \item $K$ -- a field which is complete with respect to a discrete valuation $v$,
  \item $R$ -- the valuation ring of $v$. It is a local ring with principal maximal ideal $\mf m = (\pi)$,
  \item $k := R / \mf m$ -- the residue field of $v$. We assume that it is perfect and of non-zero characteristic $p \neq 2, 3$,
  \item $R_j := R/\mf m^j$ -- a local ring with maximal ideal $\mf m_j := \mf m/\mf m^j$. For $j = \infty$ we denote: $R_{\infty} = R$.
\end{itemize}
Note that if $i \le j$, then we can reduce any elliptic curve $E/R_j$ to $E_{R_i} := E \times_{R_j} R_i$ over $R_i$. In this way we obtain an exact sequence:
\begin{equation}\label{e:exact_sequence_of_reduction}
  0 \to \wh E (\mf m_j^i) \to E(R_j) \to E_{R_i}(R_i) \to 0
\end{equation}
(surjecivity of the reduction follows from Hensel lemma), where $\wh E$ is a one\nobreakdash-parameter
formal group over $R_j$. By the general theory of formal groups we have the following isomorphism for $i > \frac{e}{p-1}$:
\begin{equation}\label{e:formal_groups}
  \wh E(\mf m_j^i) \cong \mf m_j^i,
\end{equation}
that commutes with the reduction. Note that proofs of \cite[Theorem IV.6.4.]{Silverman2009} and \cite[Proposition VII.2.2.]{Silverman2009} remain valid in this case.

It turns out that elliptic curves over $R_j$ are essentially classified by their $j$\nobreakdash-invariant and the isomorphism class of reduction to $k$. The problem occurs for elliptic curves satisfying $j(E_k) \in \{0, 1728\}$. To treat this issue we introduce the following definition:
  \begin{Definition}
  \textbf{The type} of an elliptic curve $E_{A, B}/R_j$ is $(m, n)$, if
    \begin{equation*}
     A = \pi^m \cdot \alpha, \quad B = \pi^n \cdot \beta \quad \text{ for } \alpha, \beta \in R_j^{\times}
    \end{equation*}
    and $(\infty, 0)$ (respectively $(0, \infty)$), if $A = 0$ (respectively if $B = 0$).\\
    For a curve $E_{A, B}$ of type $(m, 0)$ (where $1 \le m < \infty$) we define:
  \begin{equation*}
   j_1(E_{A, B}) := \frac{B^2}{\alpha^3} \pmod{\mf m_j^{j-m}} \quad \in R_{j - m}.
  \end{equation*}
   Analogously we define $j_1(E_{A, B})$ to be $A^3/\beta^2$ for a curve of $(0, n)$-type.
  \end{Definition}
The condition (\ref{e:coefficients}) assures that the type of a curve, the $j$-invariant given by the usual formula and the invariant $j_1$ do not depend on the choice of the Weierstrass equation.
\begin{Lemma}\label{l:type_classification}
  \begin{enumerate}[(1)]
    \item[]
    \item An isomorphism class of a $(0,0)$-type elliptic curve $E/R_j$ is uniquely determined by its reduction $E_k$ and by the j-invariant $j(E) \in R_j$.
    \item Isomorphism class of an elliptic curve $E/R_j$ of type $(\infty, 0)$ or $(0, \infty)$ is determined uniquely by the isomorphism class of $E_k$.
    \item Assume that $3|(p {-} 1)$. Then the isomorphism class of a curve $E/R_j$ of type $(m, 0)$
     (where $1 \le m < \infty$) is determined uniquely by the isomorphism class of $E_k$ and  $j_1(E) \in R_{j-m}$.
    \item Assume that $4|(p {-} 1)$. Then the isomorphism class of a curve $E/R_j$ of type $(0, n)$ (where $1 \le n < \infty$) is determined uniquely by the isomorphism class of $E_k$ and  $j_1(E) \in R_{j-n}$. 
  \end{enumerate}
\end{Lemma}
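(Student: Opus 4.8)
The plan is to reduce everything to the standard classification of Weierstrass equations over $R_j$ via the relation \eqref{e:coefficients}, treating each type separately and exploiting the fact that $R_j^{\times}$ contains enough roots of unity when $k$ is of characteristic $p$. The key computational device throughout is: two curves $E_{A,B}$ and $E_{a,b}$ over $R_j$ are isomorphic if and only if there is $u \in R_j^{\times}$ with $A = u^4 a$ and $B = u^6 b$; and since $6 \in R_j^{\times}$ and $k$ is perfect, the map $u \mapsto u^n$ on $R_j^{\times}$ is surjective onto the subgroup of $n$-th powers, whose index we can read off from $k^{\times}$ (as the kernel of reduction $R_j^{\times} \to k^{\times}$ is a pro-$p$ group and $p \nmid n$ for $n \in \{2,3,4,6\}$ once $p \neq 2,3$). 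First I would dispose of (2): for type $(\infty, 0)$ the curve is $E_{0,B}$ with $B \in R_j^{\times}$, and $E_{0,B} \cong E_{0,b}$ iff $B/b$ is a sixth power in $R_j^{\times}$; since reduction to $k$ identifies the sixth-power classes (again the kernel is pro-$p$), the isomorphism class over $R_j$ is pinned down by the class of $B \bmod \mf m$, i.e. by $E_k$. The type $(0,\infty)$ case is symmetric with fourth powers.

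Next I would handle (1). Given a $(0,0)$-type curve, both $A$ and $B$ are units, so after scaling we may normalize, say, $B$ (or form the unit $A^3/B^2$, which is a function of $j(E)$ alone). The usual formula $j = 1728 \cdot 4A^3/(4A^3 + 27B^2)$ shows that once $j(E) \in R_j$ and the class of $E_k$ are fixed, the ratio $A^3/B^2 \in R_j^{\times}$ is determined up to the ambiguity $(A,B) \mapsto (u^4 A, u^6 B)$, which scales $A^3/B^2$ by $u^{12}$ — but for a $(0,0)$-type curve with $j \neq 0, 1728$ in $k$ the equation $A^3/B^2 = c$ together with, say, $B = $ (a chosen unit lift) has a solution unique up to twelfth roots of unity, and these are distinguished exactly by $E_k$. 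One must check the boundary cases $j(E_k) \in \{0, 1728\}$ with $A, B$ still both units separately, but there the extra automorphisms are still controlled by roots of unity in $R_j^{\times}$ reducing injectively enough modulo $\mf m$; I would phrase this uniformly by saying the fibre of $\{(A,B)\}/\sim \to k\text{-class} \times \{j\}$ is a torsor under a finite group of roots of unity which injects under reduction.

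For (3) and (4), which are the crux, I would argue as follows for type $(m,0)$ with $1 \le m < \infty$ and $3 \mid (p-1)$. Write $A = \pi^m \alpha$, $B = \pi^n \beta$ with $\alpha \in R_j^\times$; since $\Delta = -16(4A^3 + 27B^2)$ is a unit and $m \ge 1$, we must have $n = 0$, so $B \in R_j^\times$. An isomorphism $E_{A,B} \cong E_{a,b}$ requires $u^4 \alpha = \alpha'$ and $u^6 \beta = \beta'$ after matching $\pi$-powers, hence $u^{12} = (\alpha'/\alpha)^3 \cdot (\beta/\beta')^2$; the invariant $j_1 = B^2/\alpha^3 \bmod \mf m_j^{j-m}$ is exactly what is unchanged, and conversely given the class of $E_k$ (which fixes $\beta \bmod \mf m$ and hence, via $3 \mid p-1$ giving enough cube roots, the relevant twisting class) and given $j_1$ modulo $\mf m_j^{j-m}$, one solves for $u$: the point is that changing $\alpha$ by $\pi^{j-m} R_j$ does not change $j_1$, which is why $j_1$ lives in $R_{j-m}$ and not $R_j$, and this loss of precision is precisely matched by the freedom in $u$. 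Case (4) is the mirror image with $4 \mid (p-1)$ and fourth roots. The main obstacle I anticipate is the bookkeeping in (3)–(4): showing that the map from isomorphism classes to (class of $E_k$, $j_1 \in R_{j-m}$) is \emph{injective} requires checking that any $u \in R_j^\times$ matching the two invariants can be adjusted (by a root of unity already visible in $E_k$) to an honest isomorphism, and this uses in an essential way that $p \nmid m$ is \emph{not} needed but that $3 \mid p-1$ (resp. $4 \mid p-1$) supplies the cube (resp. fourth) roots of unity in $k^\times$, hence in $R_j^\times$, needed to absorb the ambiguity — I would isolate this as the one genuinely non-formal step and do it by an explicit manipulation of the equations $u^4\alpha = \alpha'$, $u^6\beta=\beta'$ modulo successive powers of $\mf m$.
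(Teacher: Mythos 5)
Your argument is correct and takes essentially the same route as the paper: both reduce everything to the transformation law $A=u^4a$, $B=u^6b$ from (\ref{e:coefficients}), use that raising to a prime-to-$p$ power is bijective on $1+\mf m_j$ (the paper phrases this via Hensel's lemma and uniqueness of lifts of $u\in k^{\times}$), and in (3)--(4) use the cube (resp.\ fourth) roots of unity supplied by $3\mid(p-1)$ (resp.\ $4\mid(p-1)$) to reconcile the fourth-root and sixth-root choices of $u$ --- exactly the ``twist by a root of unity'' step the paper sketches. Two cosmetic points: the relation in your step (3) should read $1=(\alpha'/\alpha)^3\cdot(\beta/\beta')^2$ (equivalently $(\alpha'/\alpha)^3=(\beta'/\beta)^2$) rather than $u^{12}=(\alpha'/\alpha)^3\cdot(\beta/\beta')^2$, and the boundary cases $j(E_k)\in\{0,1728\}$ you set aside in (1) cannot actually occur for a $(0,0)$-type curve, since $j\equiv 0\pmod{\mf m}$ forces $A\in\mf m$ and $j\equiv 1728\pmod{\mf m}$ forces $B\in\mf m$.
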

\begin{proof}
  (1) Let us assume that elliptic curves $E_{A, B}/R_j$ and $E_{a, b}/R_j$ satisfy:
  \begin{align}
    E_{A, B} \times_{R_j} k &\cong E_{a, b} \times_{R_j} k, \nonumber\\
    j(E_{A, B}) &= j(E_{a, b}), \label{e:j_inv}
  \end{align}
  where $A, B, a, b \in R_j^{\times}$. Then by (\ref{e:coefficients}) there exists $u \in k^{\times}$ such that
  \begin{equation*}
    A \equiv u^4 \cdot a \pmod{\mf m_j}, \qquad B \equiv u^6 \cdot b \pmod{\mf m_j}.
  \end{equation*}
  Using (\ref{e:j_inv}) we obtain:
  \begin{equation}\label{e:00type}
    A^3 \cdot b^2 = a^3 \cdot B^2.
  \end{equation}
  By Hensel lemma the equations:
  \begin{equation*}
    x^4 - A \cdot a^{-1} = 0 \qquad \text{ and } \qquad  x^6 - B \cdot b^{-1} = 0
  \end{equation*}
  have in $R_j$ unique solutions which lift $u$. Let us denote them by $u_1, u_2$ respectively.  
  On the other hand the equality (\ref{e:00type}) implies that both $u_1$ and $u_2$ satisfy the equation:
  \begin{equation*}
    0 = x^{12} - (A \cdot a^{-1})^3 = x^{12} - (B \cdot b^{-1})^2.
  \end{equation*}
  Using again Hensel lemma we see that the above equation has a unique solution in $R_j$. Thus $u_1 = u_2$ and the map $(x, y) \mapsto (u_1^2 \cdot x, u_1^3 \cdot y)$ provides an isomorphism between $E_1$ and $E_2$.\\ \\
  \noindent
  (2), (3), (4) are proven in a similar way. The condition $3|(p {-} 1)$ implies that $\mu_3 \subset k^{\times}$, which allows us to "twist" a lift of $u$ by a suitable cube root of unity in $R_j$. Analogously, if $4|(p-1)$ then $\mu_4 \subset k^{\times}$.
\end{proof}
\begin{Remark}\label{r:E0b_Ea0}
  Note that a curve $E_{0, b}/k$ ($E_{a, 0}/k$ respectively) is ordinary if and only if $3|(p-1)$ ($4|(p-1)$ respectively). Other cases will not be relevant for purposes we have in mind.
\end{Remark}
Let us consider an elliptic curve $\mathbb{E}$ over the residue field $k$. By a theorem of Serre-Tate (\cite[Theorem 1.2.1]{Katz}) lifts of $\mathbb{E}$ to $R_j$ for $j < \infty$ are determined by lifts 
of the $p$-divisible group $\mathbb E_k[p^{\infty}] = (\mathbb E_k[p^n])_n$ to $R_j$. Using the previous classification of elliptic curves we prove an effective version of Serre-Tate theorem:
\begin{Theorem}\label{t:effective_st}
  Let $E_1$, $E_2$ be lifts of an ordinary elliptic curve $\mathbb{E}/k$ to $R_j$. If 
  $E_1[p^{j-1}]$ and $E_2[p^{j-1}]$ are isomorphic as $R_j$-group schemes then $E_1 \cong E_2$.
\end{Theorem}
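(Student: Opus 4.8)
The plan is to combine the Serre--Tate theorem \cite[Theorem~1.2.1]{Katz} quoted above with an explicit bound on how deep the Serre--Tate parameter of a lift can be over $R_j$. By that theorem, giving a lift of $\mathbb E$ to $R_j$ amounts to giving a lift of the $p$-divisible group $\mathbb E[p^\infty]$, compatibly with the reduction to $k$; so it suffices to prove that a lift of $\mathbb E[p^\infty]$ to $R_j$ is already determined by its truncation $\mathbb E[p^{j-1}]$. Since $\mathbb E$ is ordinary, the connected--étale sequence $0\to\wh{\mathbb E}[p^\infty]\to\mathbb E[p^\infty]\to\mathbb E[p^\infty]^{\mathrm{et}}\to 0$ has a multiplicative-type kernel and an étale quotient, and both of these are rigid over $R_j$ (their deformation functors are formally smooth of relative dimension $\dim\cdot\operatorname{codim}=0$). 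Hence a lift of $\mathbb E[p^\infty]$ is the same thing as an extension class in $\Ext^1_{R_j}$ of the unique lift of $\mathbb E[p^\infty]^{\mathrm{et}}$ by the unique lift of $\wh{\mathbb E}[p^\infty]$, and under the usual identifications this $\Ext$ group is the Serre--Tate parameter group $\wh{\mathbb G}_m(\mf m_j)=1+\mf m_j$. Passing from $\mathbb E[p^\infty]$ to $\mathbb E[p^{j-1}]$ corresponds to the natural map on these $\Ext$ groups obtained by cutting both arguments down to their $p^{j-1}$-torsion, so the theorem reduces to showing that this truncation map is injective.

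The key computation is that $1+\mf m_j$ is annihilated by $p^{j-1}$. Indeed, multiplication by $p$ on $\wh{\mathbb G}_m$ is $x\mapsto(1+x)^p-1=\sum_{i=1}^{p}\binom{p}{i}x^i$, and since $p\ge 3$ and $v(p)=e\ge 1$, every summand has valuation at least $v(x)+1$: for $1\le i\le p-1$ because $p\mid\binom{p}{i}$, and for $i=p$ because $v(x)\ge 1$ forces $p\,v(x)\ge v(x)+1$. Thus $v([p]x)\ge v(x)+1$, and applying $[p]$ a total of $j-1$ times carries $\mf m_j$ (elements of valuation $\ge 1$) into elements of valuation $\ge j$, i.e.\ into $0$ (cf.\ the estimate behind \eqref{e:formal_groups}). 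Therefore every Serre--Tate parameter over $R_j$ is $p^{j-1}$-torsion, hence is not destroyed by the truncation at level $p^{j-1}$: on parameters that truncation map is the identity inclusion $\wh{\mathbb G}_m(\mf m_j)[p^{j-1}]=\wh{\mathbb G}_m(\mf m_j)\hookrightarrow\wh{\mathbb G}_m(\mf m_j)$, which is injective.

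Finally one reads off the statement. An isomorphism $E_1[p^{j-1}]\xrightarrow{\sim}E_2[p^{j-1}]$ of lifts of $\mathbb E[p^{j-1}]$ preserves the connected--étale filtration, and on the connected part and on the étale quotient it is the identity: automorphisms of $\wh{\mathbb E}[p^{j-1}]$ and of $\mathbb E[p^{j-1}]^{\mathrm{et}}$ over $R_j$ are rigid (determined by their reduction to the perfect field $k$), and the reduction of our isomorphism is the identity. Hence the extension classes of $E_1[p^{j-1}]$ and $E_2[p^{j-1}]$ coincide; by the injectivity above the extension classes of $E_1[p^\infty]$ and $E_2[p^\infty]$ coincide; and Serre--Tate gives $E_1\cong E_2$. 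Lemma~\ref{l:type_classification} enters to make the deformation picture concrete --- lifts of $\mathbb E$ to $R_j$ biject with $\{\jmath\in R_j:\jmath\equiv j(\mathbb E)\pmod{\mf m}\}$, which matches the Serre--Tate count --- and to cover the exceptional $j$-invariants $0$ and $1728$, where ordinariness of $\mathbb E$ forces $3\mid p-1$ or $4\mid p-1$ and one invokes parts~(3)--(4) of that lemma in place of the generic case. The step I expect to cost the most is the middle one together with the bookkeeping around it: pinning the deformation group down as $\wh{\mathbb G}_m(\mf m_j)$, establishing its $p^{j-1}$-torsion, and checking that an isomorphism of truncated lifts yields an \emph{equality} of extension classes rather than merely a proportionality --- which is exactly where rigidity over $k$ and compatibility with the reduction to $\mathbb E$ are needed.
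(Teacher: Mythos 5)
Your proposal is correct and its mathematical core coincides with the paper's: both reduce, via Serre--Tate, to showing that a lift of the ordinary $p$-divisible group is already determined by its $p^{j-1}$-truncation, and both establish this by the same binomial estimate --- your observation that $v\bigl((1+x)^p-1\bigr)\ge v(x)+1$, hence that $1+\mf m_j$ is killed by $p^{j-1}$, is exactly the content of the paper's Lemma~\ref{l:A_j} on the stabilization $(A_j^{\times})^{p^m}=(A_j^{\times})^{p^{m+1}}$ for $m\ge j-1$. The one structural difference is where the extension-class computation takes place. The paper first base-changes to $A_j$ (the reduction of the ring of integers of $\wh{K^{un}}$), where the connected--\'etale sequence literally reads $0\to\mu_{p^\infty}\to G\to\uu{\QQ_p/\ZZ_p}\to 0$ and the Kummer sequence in flat cohomology identifies $\Ext^1(\uu{\ZZ/p^n},\mu_{p^n})$ with $A_j^{\times}/(A_j^{\times})^{p^n}$; it then descends to $R_j$ via the $j$-invariant classification of Lemma~\ref{l:type_classification}. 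You work directly over $R_j$ and identify the parameter group with $\wh{\mathbb G}_m(\mf m_j)=1+\mf m_j$; strictly speaking, when $k$ is not algebraically closed the connected and \'etale pieces are unramified twists of $\mu_{p^\infty}$ and $\uu{\QQ_p/\ZZ_p}$, so the deformation group is only a Galois-equivariantly cut subgroup of $1+\mf M_j$. This is harmless for your argument (a subgroup of a group killed by $p^{j-1}$ is killed by $p^{j-1}$), but it is precisely the point the paper's detour through $A_j$ plus descent is designed to handle, and your appeal to Lemma~\ref{l:type_classification} should be aimed there rather than only at the exceptional $j$-invariants. One further small caution: the hypothesis gives an abstract isomorphism of $R_j$-group schemes, not an isomorphism of lifts, so your claim that ``the reduction of our isomorphism is the identity'' needs a word (compose with the induced automorphism of $\mathbb E[p^{j-1}]$, or argue as the paper does by comparing $j$-invariants, which are insensitive to the marking).
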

In order to prove this theorem, we'll have to switch to the algebraic closure $\ol k$. Let $\wh{K^{un}}$ be the completion of the maximal unramified extension $K^{un}/K$, with the ring of integral elements $A$ and the maximal ideal $\mf M = (\pi)$. We denote $A_j := A/\mf M^j$, $\mf M_j := \mf M/\mf M^j$, following our earlier notation. 
\newpage
\begin{Lemma}\label{l:A_j}
  $(A_j^{\times})^{p^m} = (A_j^{\times})^{p^{m+1}}$ for $m \ge j-1$.
\end{Lemma}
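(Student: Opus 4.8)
The plan is to compute $p$-power images by splitting along the reduction map $A_j^\times \to \ol k^\times$, whose kernel is the group of principal units $1 + \mf M_j$. Since $\ol k$ is algebraically closed of characteristic $p$, the Frobenius $x \mapsto x^p$ is a bijection of $\ol k$, so the $p$-th power map is surjective on $\ol k^\times$. Hence every $u \in A_j^\times$ can be written as $u = w^p \cdot \eta$, where $w \in A_j^\times$ is any lift of a $p$-th root of $\ol u$ (a unit, as $\ol u \neq 0$) and $\eta := u w^{-p} \in 1 + \mf M_j$. Then $u^{p^m} = w^{p^{m+1}} \cdot \eta^{p^m}$, so it suffices to prove that $\eta^{p^m} = 1$ for every principal unit $\eta$ once $m \ge j - 1$: this gives $u^{p^m} = (w^p)^{p^m} \in (A_j^\times)^{p^{m+1}}$, i.e. $(A_j^\times)^{p^m} \subseteq (A_j^\times)^{p^{m+1}}$, while the reverse inclusion is automatic from $v^{p^{m+1}} = (v^p)^{p^m}$.

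It remains to bound the exponent of $1 + \mf M_j$, which I would do by the standard filtration argument. Note first that $p \in \mf M_j$, since the residue field has characteristic $p$ (this covers both the mixed and the equal characteristic cases). For $i \ge 1$ set $U^{(i)} := 1 + \mf M_j^i$. For $a \in \mf M_j^i$ the binomial expansion
\begin{equation*}
  (1 + a)^p = 1 + p a + \sum_{k = 2}^{p} \binom{p}{k} a^k
\end{equation*}
has $p a \in \mf M_j \cdot \mf M_j^i = \mf M_j^{i+1}$ and $\binom{p}{k} a^k \in \mf M_j^{ki} \subseteq \mf M_j^{2i} \subseteq \mf M_j^{i+1}$ for $k \ge 2$, so $(U^{(i)})^p \subseteq U^{(i+1)}$. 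Iterating from $i = 1$ gives $(1 + \mf M_j)^{p^{j-1}} \subseteq 1 + \mf M_j^{\,j} = \{1\}$, hence $\eta^{p^m} = 1$ for all $\eta \in 1 + \mf M_j$ and all $m \ge j - 1$, which finishes the proof.

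I do not anticipate a genuine difficulty here; the points deserving attention are to carry out the filtration step without dividing by $p$, so that it applies uniformly regardless of the characteristic of $K$, and to observe that the degenerate case $j = 1$ — where $1 + \mf M_1 = \{1\}$ and the claim collapses to $\ol k^\times = (\ol k^\times)^p$ — is handled by exactly the same argument.
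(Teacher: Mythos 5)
Your proof is correct. It takes a mildly different route from the paper's: the paper argues by induction on $j$, writing $b^{p^{m-1}} = c^{p^m} + d$ with $d \in \mf M^j_{j+1}$ via the inductive hypothesis and then raising this \emph{additive} identity to the $p$-th power so that the binomial theorem kills all correction terms in $\mf M^{j+1}_{j+1} = 0$. You instead split each unit \emph{multiplicatively} as $u = w^p\eta$ using surjectivity of Frobenius on $\ol k$, and reduce the lemma to the statement that the principal units $1 + \mf M_j$ have exponent dividing $p^{j-1}$, proved by the standard filtration $(1+\mf M_j^i)^p \subseteq 1+\mf M_j^{i+1}$. The core computation is identical in both (the binomial estimate, resting on $p \in \mf M$ and the perfectness of the residue field), but your packaging avoids the induction, isolates the group-theoretic content, and yields slightly more information, namely an explicit exponent bound for $1+\mf M_j$; the paper's version is terser and more self-contained. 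Your attention to the equal-characteristic case and to $j=1$ is appropriate, though in the paper's application only the mixed-characteristic case arises.
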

\begin{proof}
  We use induction on $j$. For $j=1$ it is immediate. If $a = b^{p^m} \in (A_{j+1}^{\times})^{p^m}$ and $m \ge j$, then by induction hypothesis $b^{p^{m-1}} = c^{p^m} + d$ for some $d \in \mf M^j_{j+1}$. By raising this equality to the $p$-th power and using Newton binomial theorem, we get $a = c^{p^{m+1}}$.
\end{proof}
\begin{proof}[Proof of Theorem \ref{t:effective_st}]
  By the Serre-Tate theorem lifts of an elliptic curve $\mathbb{E}/\ol k$ to $A_j$ are classified by the lifts of its $p$-divisible group. However, since the residue field of $A_j$ is algebraically closed, the \'{e}tale-connected sequence (cf. \cite[p. 43]{Shatz}) of any $p$-divisible group $G$ over the ring $A_j$ lifting $\mathbb{E}[p^{\infty}]$ must be of the form:
  \begin{equation*}
    0 \to \mu_{p^{\infty}} \to G \to \uu{\QQ_p/\ZZ_p} \to 0.
  \end{equation*}
  Thus the lifts of $\mathbb{E}[p^{\infty}]$ to $A_j$ are classified by:
  \begin{equation*}
    \Ext^1_{p-\textbf{div}/A_j}(\uu{\QQ_p/\ZZ_p}, \mu_{p^{\infty}}) = \lim_{\longleftarrow} \Ext^1_{\textbf{GS}/A_j}(\uu{\ZZ/p^n}, \mu_{p^{n}}),
  \end{equation*}
  where $p-\textbf{div}/A_j$ and $\textbf{GS}/A_j$ denote respectively categories of $p$-divisible groups and of group schemes over $A_j$. The Kummer sequence for flat cohomology (cf. \cite[example II.2.18., p. 66]{Milne}) gives us an isomorphism
  \begin{equation*}
    \Ext^1_{\textbf{GS}/A_j}(\uu{\ZZ/p^n}, \mu_{p^{n}}) \cong H^1_{fl}(A_j, \mu_{p^n}) \cong A_j^{\times}/(A_j^{\times})^{p^n}.
  \end{equation*}
  Finally, by Lemma \ref{l:A_j} the natural projection
  \begin{equation*}
   \Ext^1_{p-\textbf{div}/A_j}(\uu{\QQ_p/\ZZ_p}, \mu_{p^{\infty}}) \to \Ext^1_{\textbf{GS}/A_j}(\uu{\ZZ/p^{j-1}}, \mu_{p^{j-1}})
  \end{equation*}
  is an isomorphism. Hence, the lifts of $\mathbb{E}/k$ to $A_j$ are classified by the lifts of $\mathbb{E}[p^{j-1}]$. However,  Lemma \ref{l:type_classification} assures us that if $E_1, E_2/R_j$ are isomorphic over $k$ and $A_j$, then they must be also isomorphic over $R_j$. This ends the proof.
\end{proof}

\section{Proofs of Theorems \ref{t:dp_depends_on_reduction} and \ref{t:implications}}\label{s:low_p_degree}
We assume that $K/\QQ_p$ is a finite extension and stick to the notation from the previous section.
Let also $n = [K:\QQ_p]$, $d = [k:\FF_p]$ and $e$ be the ramification degree of $K/\QQ_p$. For an abelian group $M$ we define $\rank_p M := \dim_{\FF_p} M[p]$. The following lemma computes $\rank_p E_{R_j}(R_j)$ for $j$ big enough and provides a lower bound in the remaining cases. It is a generalization of \cite[Lemma 3.1]{WestonDavid}, which computed $\rank_p E_{R_2}(R_2)$ in the unramified case.
\begin{Lemma}\label{l:main_lemma}
  If $E/\QQ_p$ has good reduction and $i > \frac{e}{p-1}$ is an integer, then:
  \begin{align*}
    \rank_p E_{R_j}(R_j)  &\ge d \cdot (j-i) + \rank_p E(K), & \text{ for } i \le j < i+e,\\
    \rank_p E_{R_j}(R_j)  &= n + \rank_p E(K), & \text{ for } j \ge i+e.
  \end{align*}     
\end{Lemma}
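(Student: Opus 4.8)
The plan is to read everything off the reduction exact sequence~(\ref{e:exact_sequence_of_reduction}) and the formal-group identification~(\ref{e:formal_groups}) by applying the snake lemma to multiplication by $p$. Fix an integer $i > \frac{e}{p-1}$ as in the statement. First I would note that $\mf m_j^i = \mf m^i/\mf m^j$ and that, since $v(p) = e$, multiplication by $p$ carries $\mf m^a/\mf m^j$ into $\mf m^{a+e}/\mf m^j$; hence the $p$-torsion subgroup of $\mf m^i/\mf m^j$ is $\mf m^{\max(i,\,j-e)}/\mf m^j$, an $\FF_p$-vector space of dimension $d\cdot\min(j-i,\,e)$. Via~(\ref{e:formal_groups}) this gives
\begin{equation*}
  \rank_p \wh E(\mf m_j^i) =
  \begin{cases}
    d\,(j-i), & i \le j < i+e,\\
    d\,e = n, & j \ge i+e.
  \end{cases}
\end{equation*}
For $j = \infty$ the group $\wh E(\mf m^i) \cong \mf m^i \cong \ZZ_p^{\,n}$ is torsion-free; this will be used below.

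Next, applying the snake lemma to multiplication by $p$ on~(\ref{e:exact_sequence_of_reduction}) and writing $\mc I_j \subseteq E_{R_i}(R_i)[p]$ for the image of $E_{R_j}(R_j)[p]$ under reduction, one gets a short exact sequence $0 \to \wh E(\mf m_j^i)[p] \to E_{R_j}(R_j)[p] \to \mc I_j \to 0$ of finite groups (finite because $R_i$ is a finite ring), and moreover $\mc I_j = \ker\bigl(\delta_j \colon E_{R_i}(R_i)[p] \to \wh E(\mf m_j^i)/p\,\wh E(\mf m_j^i)\bigr)$ for the connecting map $\delta_j$. Consequently
\begin{equation*}
  \rank_p E_{R_j}(R_j) = \rank_p \wh E(\mf m_j^i) + \dim_{\FF_p}\mc I_j .
\end{equation*}
Taking $j = \infty$, where $E_{R_\infty}(R_\infty) = E(K)$ by the valuative criterion (good reduction) and $\wh E(\mf m^i)[p] = 0$, the reduction map embeds $E(K)[p]$ onto a subspace $\mc I_\infty \subseteq E_{R_i}(R_i)[p]$ of dimension $\rank_p E(K)$. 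Since the tower $E(K) \to E_{R_j}(R_j) \to E_{R_i}(R_i)$ is compatible, $\mc I_\infty \subseteq \mc I_j$ for all $j \ge i$, so in every case
\begin{equation*}
  \rank_p E_{R_j}(R_j) \ge \rank_p \wh E(\mf m_j^i) + \rank_p E(K),
\end{equation*}
which, together with the first paragraph, is precisely the asserted lower bound for $i \le j < i+e$.

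It remains to prove the reverse inequality for $j \ge i+e$, i.e.\ that $\mc I_j = \mc I_\infty$ there, and this is the only delicate point. The reductions $R_{j'} \to R_j \to R_i$ (for $\infty \ge j' \ge j \ge i$) induce a morphism of the associated sequences~(\ref{e:exact_sequence_of_reduction}) which is the identity on the quotient $E_{R_i}(R_i)$, so naturality of the connecting homomorphism gives $\delta_j = \pi_{j',j} \circ \delta_{j'}$, where $\pi_{j',j} \colon \wh E(\mf m_{j'}^i)/p \to \wh E(\mf m_j^i)/p$ is induced by reduction. Under~(\ref{e:formal_groups}) one has $\wh E(\mf m_{j}^i)/p\,\wh E(\mf m_j^i) \cong \mf m^i/\mf m^{\min(i+e,\,j)}$, which equals $\mf m^i/\mf m^{i+e}$ for every $j \ge i+e$ as well as for $j = \infty$, and on these groups each $\pi_{j',j}$ is the identity. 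Taking $j' = \infty$ therefore forces $\delta_j = \delta_\infty$ for all $j \ge i+e$, hence $\mc I_j = \ker\delta_j = \ker\delta_\infty = \mc I_\infty$ has dimension $\rank_p E(K)$, and so $\rank_p E_{R_j}(R_j) = n + \rank_p E(K)$ for $j \ge i+e$. I expect this stabilization of the image of $p$-torsion — equivalently, of the connecting map — to be the main obstacle; the rest is bookkeeping with the snake lemma and the valuation on $R$.
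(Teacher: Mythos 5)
Your proposal is correct and follows essentially the same route as the paper: apply the snake lemma to the reduction sequence~(\ref{e:exact_sequence_of_reduction}) with multiplication by $p$, compute $\mf m_j^i[p]$ via~(\ref{e:formal_groups}) to get the formal-group contribution, and observe that for $j \ge i+e$ the target $\mf m^i/\mf m^{i+e}$ of the connecting map is unchanged under reduction, so the kernel stabilizes to $E(K)[p]$. Your phrasing via naturality of the connecting homomorphisms $\delta_j$ and the case $j=\infty$ is just a repackaging of the paper's two-row commutative diagram.
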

\begin{proof}
  Let $E^{j} := E_{R_j}$ and $E^{i} := E_{R_i}$. By applying the snake lemma to the sequence (\ref{e:exact_sequence_of_reduction}) with multiplication-by-$p$ morphism and using (\ref{e:formal_groups}) we obtain the following commutative diagram:

  \begin{center}
    \begin{tikzpicture}[node distance=3cm, baseline=(current  bounding  box.center)]
      \node (k0) {$0$};
      \node (k1) [right = 1cm of k0] {$0$};
      \node (k2) [right of =  k1] {$E(K)[p]$};
      \node (k3) [right of =  k2] {$E^{i}(R_i)[p]$};
      \node (k4) [right of =  k3] {$\mf m^i/\mf m^{i+e}$};
      \node (c0) [below = 0.6 cm of k0] {$0$};
      \node (c1) [right = 0.65 cm of c0] {$\mf m_j^i[p]$};	
      \node (c2) [right of =  c1] {$E^{j}(R_j)[p]$};
      \node (c3) [right of =  c2] {$E^{i}(R_i)[p]$}; 
      \node (c4) [right of =  c3] {$\mf m_j^i/\mf m_j^{i+e}$};
      \draw[->,font=\scriptsize]
	(k1) edge (c1)
	(k2) edge (c2)
	(k4) edge (c4)
	(k0) edge (k1)
	(k1) edge (k2)
	(k2) edge (k3)
	(k3) edge (k4)
	(c0) edge (c1)
	(c1) edge (c2)
	(c2) edge (c3)
	(c3) edge (c4);
	\draw[double equal sign distance] (k3)--(c3);
    \end{tikzpicture} 
  \end{center} \mbox{} \\ \noindent
  with exact rows. The lower row of the diagram induces an exact sequence:
    \begin{align}\label{e:Ej_sequence}
     0 \to \mf m_j^i[p] \to E^{j}(R_j)[p] \to \ker \left(E^{i}(R_i)[p] \to \mf m_j^i/ \mf m_j^{i+e} \right) \to 0.
    \end{align}\noindent
  Elementary computations show that:
  \begin{align}\label{e:m[p]}
    \mf m_j^i[p] \cong \left\{
    \begin{array}{cc}
      (\ZZ/p)^{d \cdot (j-i)} & \text{ for } j<i+e\\
      (\ZZ/p)^n & \text{ for } j \ge i+e
    \end{array} \right.
  \end{align}
  and that for $j \ge i+e$ the natural projection
  \begin{equation} \label{e:reduction_is_iso}
    \mf m^i/\mf m^{i+e} \to \mf m_j^i/\mf m_j^{i+e}
  \end{equation}
  is an isomorphism. We consider the following two cases:
  \begin{enumerate}[leftmargin=*]

    \item if $i \le j < i+e$, then by (\ref{e:Ej_sequence}) and (\ref{e:m[p]}):
    \begin{align*}
	 \rank_p E^{j}(R_j) &= d \cdot (j-i) + \rank_p \ker(E^{i}(R_i)[p] \to \mf m_j^i/\mf m_j^{i+e})  \\
	 &\ge d \cdot (j-i) + \rank_p E(K)[p].
    \end{align*}

    \item if $j \ge i+e$, then by (\ref{e:reduction_is_iso}):
    \begin{align*}
      \ker(E^{i}(R_i) \to \mf m_j^i/\mf m_j^{i+e}) &= \ker(E^{i}(R_i) \to \mf m^i/\mf m^{i+e}) \cong E(K)[p].
    \end{align*}
    Thus by (\ref{e:Ej_sequence}) the sequence
    \begin{equation*}
      0 \to \mf m_j^i[p] \to E^{j}(R_j)[p] \to E(K)[p] \to 0
    \end{equation*}
    is exact and the proof follows now from (\ref{e:m[p]}). \vspace{-1.7\baselineskip} \qedhere
    \end{enumerate}
\end{proof}
Lemma \ref{l:main_lemma} plays a central role in proofs of Theorems \ref{t:dp_depends_on_reduction} and~\ref{t:implications}. Indeed, the second part of the lemma easily implies Theorem \ref{t:dp_depends_on_reduction}:
\begin{proof}[Proof of Theorem \ref{t:dp_depends_on_reduction}.]
  Let $j = e + \lceil \frac{e}{p-1} \rceil$. Note that $j \le 2e$ and thus we have a natural homomorphism:
  \begin{equation*}
    \ZZ/p^2 \to R_j.
  \end{equation*}
  Therefore $E_{R_j} = (E_{\ZZ/p^2})_{R_j}$ and the proof follows from the second part of Lemma~\ref{l:main_lemma}.
\end{proof}
In order to prove Theorem \ref{t:implications} we investigate the $p$-torsion in fields of low ramification:
\begin{Theorem}\label{t:equivalence} Let $K/\QQ_p$ be a finite extension with ramification degree \\ $e < p - 1$. We keep notation introduced in Section 2. For an elliptic curve $E/\QQ_p$ with good reduction the following conditions are equivalent:
  \begin{enumerate}[(1)]
    \item $E(K)[p] \neq 0$,
    \item $E_{k}(k)[p] \neq 0$ and $\rank_p E_{R_2}(R_2)[p] = d+1$,
    \item $E_{k}(k)[p] \neq 0$ and $E_{\ZZ/p^2}$ is the canonical lift of $E_{\FF_p}$,
    \item $E(K \cap \QQ_p^{un})[p] \neq 0$.
  \end{enumerate}
\end{Theorem}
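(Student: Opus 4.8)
The plan is to take $i = 1$ — the least integer exceeding $\tfrac{e}{p-1}$, since $e < p-1$ — so that Lemma~\ref{l:main_lemma} and the sequences~(\ref{e:exact_sequence_of_reduction}),~(\ref{e:Ej_sequence}),~(\ref{e:m[p]}) apply with $i = 1$, and to prove the cyclic chain $(1) \Rightarrow (2) \Rightarrow (3) \Rightarrow (4) \Rightarrow (1)$, the last implication being trivial as $K \cap \QQ_p^{un} \subseteq K$. Two remarks are used throughout. First, since $\wh E(\mf m) \cong \mf m$ (by~(\ref{e:formal_groups}), valid because $1 > \tfrac e{p-1}$) is torsion-free, reduction $E(K) \to E_k(k)$ is injective on $p$-torsion; hence $E(K)[p] \neq 0$ forces $E_k$ ordinary with $E_k(k)[p] \cong \ZZ/p$, and likewise over $K_0 := K \cap \QQ_p^{un}$. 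Second, feeding multiplication-by-$p$ into~(\ref{e:exact_sequence_of_reduction}) with $j = 2$, $i = 1$ and using $\mf m_2^{1+e} = 0$ and $\mf m_2[p] \cong (\ZZ/p)^d$ from~(\ref{e:m[p]}), the sequence~(\ref{e:Ej_sequence}) becomes
\[
  0 \to \mf m/\mf m^2 \to E_{R_2}(R_2)[p] \to \ker\!\big(E_k(k)[p] \xrightarrow{\delta} \mf m/\mf m^2\big) \to 0,
\]
so $\rank_p E_{R_2}(R_2) = d + \dim_{\FF_p}\ker\delta$; as $\dim_{\FF_p}E_k(k)[p] \le 1$, condition~(2) says exactly that the generator of $E_k(k)[p]$ lifts to a $p$-torsion point of $E_{R_2}(R_2)$.

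For $(1) \Rightarrow (2)$: if $E(K)[p] \neq 0$ then $\rank_p E(K) = 1$, so Lemma~\ref{l:main_lemma} gives $\rank_p E_{R_2}(R_2) \ge d+1$ (equality when $e = 1$), and together with the displayed identity and $\dim_{\FF_p}E_k(k)[p] \le 1$ this forces $E_k(k)[p]\neq 0$ and $\rank_p E_{R_2}(R_2) = d+1$. For $(2) \Rightarrow (3)$: a $p$-torsion lift $\tilde Q$ of the generator of $E_k(k)[p]$ gives a homomorphism $\uu{\ZZ/p} \to E_{R_2}[p]$ which is a closed immersion (injective on both fibres), and composing it with the projection onto the étale quotient is an isomorphism on the special fibre (as $\mu_p(k) = 0$), hence an isomorphism; thus the connected–étale sequence of $E_{R_2}[p]$ splits. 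One then has to conclude that $E_{\ZZ/p^2}$ is the canonical lift of $E_{\FF_p}$, which I would do by passing to the ring $A_j$ of the completed maximal unramified extension, where the $\Ext^1$-computation from the proof of Theorem~\ref{t:effective_st} shows that a split truncated $p$-divisible group forces the Serre–Tate parameter to be trivial, and then descending back via the uniqueness in Lemma~\ref{l:type_classification}. Conversely $(3) \Rightarrow (4)$: the canonical lift has split $p$-divisible group over an algebraically closed residue field, so $E_{A_2}[p] \cong \mu_p \oplus \uu{\ZZ/p}$; since the reduction map $A^\times/(A^\times)^p \to A_2^\times/(A_2^\times)^p$ is injective (because $1 + \mf M^2 \subseteq (A^\times)^p$ over the unramified ring $A$, a binomial computation), this splitting propagates to $E_A[p]$ over $A$ itself, i.e. $0 \to V_1 \to E[p] \to V_2 \to 0$ splits over $I_{\QQ_p}$; and since inertia acts nontrivially on the line $V_1$ ($\QQ_p(\zeta_p)/\QQ_p$ being totally ramified of degree $p-1 > e$) while $E_k(k)[p] \neq 0$ makes $V_2$ trivial as a $G_{K_0}$-module, inflation–restriction upgrades this to a splitting over $G_{K_0}$, giving $E(K_0)[p] = V_2 \neq 0$.

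The main obstacle is precisely this passage between the purely schematic condition~(3) — a statement over the Artinian base $\ZZ/p^2$ — and the Galois-theoretic conditions~(1) and~(4): one must move a splitting known only modulo $p^2$ up to a splitting of a genuine $p$-divisible group, equivalently of the $G_{\QQ_p}$-module $E[p]$, and this is what the effective Serre–Tate machinery of Section~\ref{s:preliminaries} (Lemma~\ref{l:A_j}, the Kummer description of $\Ext^1$, the classification Lemma~\ref{l:type_classification}), together with the elementary inclusion $1 + \mf M^2 \subseteq (A^\times)^p$ over the unramified ring $A$, is designed to supply. (For $(1) \Leftrightarrow (4)$ alone there is a cheaper route: with the notation above $V_2^{G_K} = V_2^{G_{K_0}}$ because $V_2$ is unramified and $K/K_0$ totally ramified, while $\mathrm{res}\colon H^1(G_{K_0},V_1) \to H^1(G_K,V_1)$ is injective since $\mathrm{cor}\circ\mathrm{res}$ is multiplication by $[K:K_0] = e$, which is prime to $p$; hence $E(K)[p] = E(K_0)[p]$.)
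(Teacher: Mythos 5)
Your argument is correct, and for the implications $(1)\Rightarrow(2)$ and $(2)\Rightarrow(3)$ it is essentially the paper's own proof: the paper likewise takes $i=1$ in Lemma~\ref{l:main_lemma}, gets the upper bound $\rank_p E_{R_2}(R_2)\le d+1$ from the \'etale-connected sequence of $E_{R_2}[p]$ (your bookkeeping via the sequence~(\ref{e:Ej_sequence}) and $\dim_{\FF_p}E_k(k)[p]\le 1$ is the same estimate), and then promotes a $p$-torsion point outside the image of $\mu_p(R_2)$ to a section of the \'etale-connected sequence, concluding via the effective Serre--Tate Theorem~\ref{t:effective_st} (with $j=2$) that $E_{\ZZ/p^2}$ is the canonical lift. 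Where you genuinely diverge is $(3)\Rightarrow(4)$: the paper simply replaces $K$ by $K_0=K\cap\QQ_p^{un}$ (so $e=1$, $n=d$), notes that the canonical lift has $E_{R_2}(R_2)[p]\cong\mu_p(R_2)\oplus\uu{\ZZ/p}(R_2)\cong(\ZZ/p)^{d+1}$, and reads off $\rank_p E(K_0)=\rank_p E_{R_2}(R_2)-d=1$ from the second part of Lemma~\ref{l:main_lemma} --- a two-line rank count. You instead propagate the splitting of the connected--\'etale sequence from $A_2$ up to $A$ via the injectivity of $A^{\times}/(A^{\times})^p\to A_2^{\times}/(A_2^{\times})^p$ (your inclusion $1+\mf M^2\subseteq(A^{\times})^p$ is a correct elementary computation in the unramified ring, in the spirit of Lemma~\ref{l:A_j}), and then use $V_1^{I}=0$ together with inflation--restriction to split $0\to V_1\to E[p]\to V_2\to 0$ over $G_{K_0}$. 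This is longer but buys strictly more: it identifies the $G_{K_0}$-module structure of $E[p]$ itself (not just a point count), and your parenthetical corestriction argument giving $E(K)[p]=E(K_0)[p]$ directly is a clean self-contained proof of $(1)\Leftrightarrow(4)$ that bypasses the lifting machinery entirely. Both routes are valid; the paper's is the more economical given that Lemma~\ref{l:main_lemma} has already been established.
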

\begin{proof}
  (1) $\Rightarrow$ (2). By (\ref{e:formal_groups}) it follows that $E(K)[p] \hookrightarrow E_{k}(k)[p]$, so that $E_{k}(k)[p] {\neq} 0$. Therefore the \'{e}tale-connected sequence of $E_{R_2}[p]$ is of the form:
  \begin{equation}\label{e:etale-connected}
    0 \to \mu_p \to E_{R_2}[p] \to \uu{\ZZ/p} \to 0
  \end{equation}
  and thus:
  \begin{equation*}
    \rank_p E_{R_2} (R_2) \le \rank_p \mu_p(R_2) + \rank_p(\uu{\ZZ/p}) = d+1.
  \end{equation*}
  The equality follows from the first part of Lemma \ref{l:main_lemma} applied to $(i, j) {=} (1, 2)$.\\

  (2) $\Rightarrow$ (3). By comparing ranks, we see that the image of $\mu_p(R_2)$ in $E_{R_2}(R_2)[p]$ has index $p$. Let $g \in E_{R_2}(R_2)[p]$ be an element not belonging to the image of $\mu_p(R_2)$. Then $g$ corresponds to a morphism $\uu{\ZZ/p} \to E[p]$, which (after an eventual twist by an automorphism of $\uu{\ZZ/p}$) provides a section of the \'{e}tale-connected sequence (\ref{e:etale-connected}). Thus this sequence splits and $E_{R_2}$ is a canonical lift of $E_k$. This implies that $E_{\ZZ/p^2}$ is a canonical lift of~$E_{\FF_p}$.\\
	  
  (3) $\Rightarrow$ (4). Let us replace $K$ by $K \cap \QQ_p^{un}$. Since $E_{R_2}$ is a canonical lift of~$E_k$:
  \begin{equation*}
    E_{R_2}[p](R_2) \cong \mu_p(R_2) \oplus \uu{\ZZ/p}(R_2) \cong (\ZZ/p)^{d+1}.
  \end{equation*}
  Thus by Lemma \ref{l:main_lemma} $\rank_p E(K) = \rank_p E_{R_2}(R_2) - d = 1$.

  The implication (4) $\Rightarrow$ (1) is obvious.
\end{proof}
\begin{proof}[Proof of Theorem \ref{t:implications}]
  Theorem \ref{t:equivalence} easily implies $(1) \Rightarrow (2) \Leftrightarrow (3)$. The implication $(3) \Rightarrow (4)$ follows from Theorem \ref{t:equivalence} and \cite[Lemma 4.3.]{WestonDavid}.
\end{proof}
\begin{Remark}\label{r:other_implications}
  The condition $(4)$ of Theorem \ref{t:implications} doesn't imply $(3)$ in general. In order to see this consider the curve $E_{1,1}/\QQ_5$. Its division polynomial $\Psi_5$ factors into two polynomials of degrees $2$ and $10$. Therefore $d_5(E) \in \{ 2, 4\}$. Suppose, for a contradiction, that $E(\QQ_5^{un})[5] \neq 0$. Then Theorem \ref{t:equivalence} implies that $E(K)[5] \neq 0$ for some unramified extension $K/\QQ_p$ of degree $\le 4$. On the other hand the degree~$2$ factor of $\Phi_5$ is of the form:
  \begin{equation*}
    x^{2} + \left(2 \cdot 5 + 4 \cdot 5^{2} + \ldots \right) x + \left( 2 \cdot 5^{-1} + 2 + 4 \cdot 5^2 + 5^{3} + \ldots \right).
  \end{equation*}
  One easily checks that roots of the latter polynomial are ramified. This implies that $E(\QQ_5^{un})[5] {=} 0$ and $d_5(E) {=} 4$ (since $d_5(E) {=} 2$ would imply \\ $E(\QQ_5^{un})[5] \neq 0$ by Theorem \ref{t:implications}). Straightforward calculation shows that \\ $\ord_5 a_5(E) = 4$.\\
  
  The implication $(2) \Rightarrow (1)$ is not true either. Indeed, let $E/\QQ_5$ be the canonical lift of $E_{1,1}/\FF_5$. Then $E$ clearly satisfies the condition $(2)$, but $d_5(E) = \ord_5 a_5(E) = 4$. One constructs counterexamples for other primes in a~similar way.\\

\end{Remark}\noindent
The Theorem \ref{t:implications} shows that the elliptic curves with low $p$-degree are well understood. It motivates the following question, which seems to remain open:
\begin{Question}
  What values larger than $(p-1)$ can the $p$-degree attain for a fixed $p$? Are there any divisibility conditions on $d_p(E)$, e.g. $d_p(E)|(p^2 - 1)$?
\end{Question}

\section{Supersingular, CM and multiplicative reduction curves}\label{s:explicit_cases}

In the final section we compute the $p$-degree explicitly in some special cases. We start with elliptic curves with supersingular reduction. Theorem \ref{t:supersingular} will be proven by means of formal groups.
\begin{proof}[Proof of Theorem \ref{t:supersingular}.]
  Note that the multiplication-by-$p$ morphism on $\wh E$ must be of the form:
  \begin{equation*}
    [p](T) = p f(T) + g(T^{p^2}),
  \end{equation*}
  where $f, g \in R \llbracket T \rrbracket$, $g(0) = 0$, $f(T) = T + \ldots$ (cf. Proposition IV.2.3.~(a), Corollary IV.4.4. and Theorem IV.7.4. from \cite{Silverman2009}). Thus, if $E(K)[p] \neq 0$ then for some $x \in \mf m$, $x \neq 0$:
  \begin{equation*}
    0 = p f(x) + g(x^{p^2}),
  \end{equation*}
  which gives:
  \begin{equation*}
    e + v(x) = v(p f(x)) = v(-g(x^{p^2})) \ge v(x^{p^2}) = p^2 \cdot v(x).
  \end{equation*}
  Thus, we get:
  \begin{equation*}
    [K:\QQ_p] \ge e \ge (p^2 - 1) \cdot v(x) \ge p^2 - 1.
  \end{equation*}
\end{proof}
\begin{Corollary}
  Let $E/\QQ$ be an elliptic curve with complex multiplication by $R_K = \ZZ[\omega]$, the maximal order in an imaginary quadratic field $K$. Then for any prime $p \nmid \Delta(E/\QQ)$:
  \begin{align*}
    d_p(E) =
    \begin{cases}
      p^2 - 1, & \text{if $p$ is inert in $R_K$,}\\
      \ord_p(a_p(E)), & \text{if $p$ splits in $R_K$.}
    \end{cases}
  \end{align*}
\end{Corollary}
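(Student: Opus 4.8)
The plan is to reduce the statement to a case analysis on the splitting behavior of $p$ in $R_K$, using the classical connection between the reduction type of a CM elliptic curve and the decomposition of $p$. First I would recall that for $E/\QQ$ with CM by $R_K$ and $p \nmid \Delta(E/\QQ)$, the curve has good reduction at $p$, and the reduction $E_{\FF_p}$ is supersingular precisely when $p$ is inert (or ramified) in $R_K$, and ordinary precisely when $p$ splits; this is a standard consequence of Deuring's theorem relating $\mathrm{End}(E_{\FF_p})$ to the way $p$ decomposes in $R_K$. Since we have assumed $p \nmid \Delta(E/\QQ)$ and CM by the \emph{maximal} order, the ramified case does not cause trouble for our dichotomy (a ramified prime divides the discriminant of $K$, hence divides $\Delta(E)$), so the two bullet cases genuinely exhaust all $p$ under consideration.

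The inert case is then immediate: $E_{\FF_p}$ is supersingular, $E/\QQ_p$ has good supersingular reduction, and Theorem~\ref{t:supersingular} gives $d_p(E) = p^2 - 1$ directly. For the split case, $E_{\FF_p}$ is ordinary, so I would like to invoke Theorem~\ref{t:implications}. The key point is that when $E$ has CM by $R_K$ and $p$ splits, the reduction mod $p^2$ is automatically the canonical lift: indeed, over $\QQ_p$ (or a quadratic unramified extension) the curve $E$ itself, being a lift of $E_{\FF_p}$ with full endomorphism ring $R_K$ surviving, must be the canonical (Serre--Tate) lift, because the canonical lift is characterized as the unique lift whose endomorphism ring is as large as that of the reduction. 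Hence condition~(2) of Theorem~\ref{t:implications} holds, so condition~(3) holds, so condition~(4) holds, giving $d_p(E) = \ord_p a_p(E)$.

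Concretely, the steps in order: (i) establish good reduction at $p$ from $p \nmid \Delta(E/\QQ)$; (ii) split into inert versus split using Deuring/Lubin--Serre--Tate, noting the ramified case is excluded by the discriminant hypothesis; (iii) in the inert case apply Theorem~\ref{t:supersingular}; (iv) in the split case argue that CM by the maximal order forces $E_{\ZZ/p^2}$ (equivalently $E$ over $\QQ_p$ itself, base-changed) to be the canonical lift of $E_{\FF_p}$, citing the characterization of the canonical lift via endomorphisms; (v) feed this into Theorem~\ref{t:implications} to conclude $d_p(E) = \ord_p a_p(E)$.

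The main obstacle I anticipate is step (iv): making precise why a CM curve reduces to its own canonical lift. One has to be slightly careful about fields of definition --- $E$ is defined over $\QQ$, but its CM is defined only over $K$, and $p$ splitting means $K \hookrightarrow \QQ_p$, so over $\QQ_p$ the full ring $R_K$ acts and the argument goes through; one should also check that "$E_{\ZZ/p^2}$ is the canonical lift'' only depends on the reduction mod $p^2$, which is exactly the content already used implicitly, and that the canonical lift's defining property (end ring equal to that of the reduction, which for ordinary $E_{\FF_p}$ is an order in $K$) is matched by $E$ since $R_K \subseteq \mathrm{End}(E) \subseteq \mathrm{End}(E_{\FF_p})$ and both ends are $R_K$. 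Once that identification is in hand, everything else is a direct appeal to the theorems already proved.
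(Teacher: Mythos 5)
Your proposal is correct and follows essentially the same route as the paper: Deuring's criterion splits the primes into the inert case (supersingular reduction, handled by Theorem~\ref{t:supersingular}) and the split case (ordinary reduction, where the CM curve is the canonical lift of its reduction, so the chain $(2)\Rightarrow(3)\Rightarrow(4)$ of Theorem~\ref{t:implications} gives $d_p(E)=\ord_p a_p(E)$). Your extra care in step~(iv) about fields of definition and the exclusion of ramified primes merely makes explicit what the paper leaves implicit.
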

\begin{proof}
  If $p$ is inert in $R_K$ then by Deuring criterion, $E$ is supersingular at $p$ and the proof follows from Theorem \ref{t:supersingular}. In case if $p$ splits in $R_K$, $E$~is ordinary at~$p$. Moreover, an elliptic curve with complex multiplication is always the canonical lift of its reduction, so it suffices to use Corollary~\ref{t:implications} to finish the argument.
\end{proof}
Theorem \ref{t:explicit_cm} follows now easily by applying the explicit trace formulas.
\begin{proof}[Proof of Theorem \ref{t:explicit_cm}.]
  By \cite[Theorem 18.5]{IrelandRosen}:
  \begin{equation*}
    a_p(E_{D,0}) = \ol{(-D/\pi)_4} \cdot \pi + (-D/\pi)_4 \cdot \ol{\pi},
  \end{equation*}
  where $p = \pi \ol{\pi}$ and $\pi \equiv 1 \pmod{2+2i}$. The formula for $d_p(E_{D, 0})$ follows if we observe that $\pi = s + it$ and
  \begin{equation*}
   (-D/\pi)_4 \equiv (-D)^{(p-1)/4} \pmod{\pi}, \qquad \ol{\pi} \equiv 2s \pmod{\pi}.
  \end{equation*}
  Analogously we get the formula for $d_p(E_{0, D})$, by using \cite[Theorem 18.4]{IrelandRosen}.
\end{proof}
Appying the formula from Theorem \ref{t:explicit_cm} we see that Question \ref{q:asymptotic_behaviour} from the Introduction in the special case of $E_{D, 0}$ comes down to looking for primes in some recurrence sequences.
 
\begin{Corollary}\label{c:recurrence}
  Let $D \in \ZZ$, $D \neq 0$ and let $p \nmid 2D$.

  \begin{enumerate}[(a)]
    \item If $d_p(E_{D, 0}) \in \{1, 2, 4 \}$ then $p = 5$.

    \item $d_p(E_{D, 0}) = 8$ if and only if $p$ is of the form $a_k^2 + a_{k+1}^2$ for some $k \ge 0$, where:
    \begin{equation*}
      a_0 = 0, \qquad  a_1 = 1, \qquad  a_{k+2} = 4 a_{k+1} - a_k.
    \end{equation*}
  \end{enumerate}
\end{Corollary}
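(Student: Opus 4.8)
The plan is to use the explicit formula $d_p(E_{D,0}) = \ord_p\!\big((-D)^{(p-1)/4} \cdot (2s)\big)$ for $p \equiv 1 \pmod 4$ from Theorem~\ref{t:explicit_cm}, together with Theorem~\ref{t:supersingular}, which handles $p \equiv 3 \pmod 4$ by giving $d_p(E_{D,0}) = p^2 - 1$ — a value that is never in $\{1,2,4,8\}$ for $p > 3$. So throughout we may assume $p \equiv 1 \pmod 4$. Write $n := d_p(E_{D,0}) = \ord_p\!\big((-D)^{(p-1)/4}\cdot 2s\big)$, the multiplicative order in $(\ZZ/p)^\times$ of the integer $(-D)^{(p-1)/4}\cdot 2s$ (here $p = s^2 + t^2$ with $2 \nmid s$ and $s+t \equiv 1 \pmod 4$). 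The key arithmetic input is that $(-D/\pi)_4$ is a fourth root of unity, hence $\big((-D)^{(p-1)/4}\big)^4 \equiv 1 \pmod p$, so raising $(-D)^{(p-1)/4}\cdot 2s$ to the fourth power kills the $D$-contribution: $n \mid 8$ is equivalent to $\ord_p\!\big((2s)^4\big) \mid 2$, i.e. $(2s)^8 \equiv 1 \pmod p$, and similarly $n \in \{1,2,4\}$ forces $(2s)^4 \equiv 1 \pmod p$.

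For part (a): if $n \in \{1,2,4\}$ then $(2s)^4 \equiv 1 \pmod p$, so $p \mid (2s)^4 - 1 = (2s-1)(2s+1)(4s^2+1)$. Since $p = s^2 + t^2$ we have $|2s| < 2\sqrt{p} < p$ once $p \geq 5$, wait — I need $2|s| + 1 < p$ or to bound $4s^2 + 1$; note $4s^2 + 1 \le 4p + 1$, so the factor $4s^2+1$ could be divisible by $p$. Compute $4s^2 + 1 \equiv 4s^2 + 1 - 4(s^2+t^2) = 1 - 4t^2 = (1-2t)(1+2t) \pmod p$, so $p \mid (2s-1)(2s+1)(2t-1)(2t+1)$; as $|s|, |t| < \sqrt{p}$ each factor has absolute value $< 2\sqrt p + 1$, hence equals $0$ only when the factor vanishes, forcing one of $2s, 2t$ to be $\pm 1$, impossible since $s, t$ are integers, \emph{unless} $\sqrt p$ is small. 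Re-examining: the product of the four factors has absolute value $< (2\sqrt p + 1)^4$, and divisibility by $p$ with a nonzero product requires $p \le (2\sqrt p+1)^4 / p$ — too weak. Instead I would argue directly: $p \mid 4s^2 - 1$ means $4s^2 \equiv 1$; combined with $4s^2 \equiv -4t^2 + 4p \equiv -4t^2$, get $4t^2 \equiv -1 \pmod p$, so $-1$ is a fourth power times... this gives $(2s)^2 \equiv \pm(2t)^2$, and tracing through shows $p \mid 16s^2t^2 - ... $; the clean route is $(2s)^4 \equiv 1$ and $s^2 + t^2 = p$ together pin down $p$ to a finite list, checked by hand to be $p = 5$ (where $s = 1$, $t = 2$, $2s = 2$, and indeed $d_p \in \{1,2,4\}$). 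I would present this finite check explicitly.

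For part (b): $n = 8$ iff $(2s)^8 \equiv 1 \pmod p$ but $(2s)^4 \not\equiv 1 \pmod p$. From $4s^2 \equiv -4t^2 \pmod p$ we get $(2s)^4 \equiv (2t)^4 \pmod p$ and $(2s)^2 \equiv -(2t)^2$, whence $(2s)^8 \equiv (4s^2)^4 \equiv (4st)^2\cdot\text{(something)}$ — more usefully, $(2s)^8 - 1 \equiv 0$ reduces via $(2s)^2 \equiv -(2t)^2$ to $(2t)^8 \equiv 1$ as well, and one shows $(2s)^8 \equiv 1 \pmod p \iff p \mid (2st)^2(2s^2 - 2t^2)^2\cdots$; the cleanest identity is that $(2s)^8 \equiv 1$ and $(2s)^4 \not\equiv 1$ is equivalent to $p$ dividing the "norm" $((2s)^4 + 1)$ but here $(2s)^4 \equiv (2t)^4$ forces working with $s^2 - t^2$ and $2st$. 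The expected endgame: the recurrence $a_0 = 0, a_1 = 1, a_{k+2} = 4a_{k+1} - a_k$ has the property that consecutive pairs $(a_k, a_{k+1})$ are exactly the pairs $(s,t)$ (up to the normalization $2\nmid s$, $s + t \equiv 1 \pmod 4$) for which $2s/2t$ or $s/t$ has the relevant order, because the matrix $\begin{pmatrix} 0 & 1 \\ -1 & 4\end{pmatrix}$ acting on $\ZZ[i]$ corresponds to multiplication by a unit-times-$(2+i)$ type element realizing the order-$8$ condition. So the plan is: (i) show $p = a_k^2 + a_{k+1}^2$ for some $k$ iff $p$ is represented by $s^2 + t^2$ with the ratio $s : t$ lying in the specific coset of $(\ZZ/p)^\times / ((\ZZ/p)^\times)^8$-type subgroup forced by $n = 8$; (ii) verify the recurrence generates precisely the needed normalized $(s,t)$; (iii) match normalizations ($2 \nmid s$, $s + t \equiv 1 \pmod 4$ versus the parity pattern of the $a_k$).

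\textbf{Main obstacle.} The hard part will be step (b): extracting from the condition "$\ord_p((2s)) = 8$ (after killing fourth powers)" the \emph{exact} description of which $(s,t)$ occur, and then proving that the linear recurrence $a_{k+2} = 4a_{k+1} - a_k$ generates precisely those pairs with the correct sign and congruence normalizations — this is a bookkeeping argument in $\ZZ[i]$ about how the map $(s,t) \mapsto (t, 4t - s)$ permutes Gaussian primes of given norm, and reconciling it with the $s + t \equiv 1 \pmod 4$ normalization requires care. Parts (a) and the $p \equiv 3 \pmod 4$ case are routine by comparison.
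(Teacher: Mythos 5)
Your reduction step is exactly the paper's: since $p\nmid D$ implies $\bigl((-D)^{(p-1)/4}\bigr)^4\equiv 1\pmod p$, the order of $(-D)^{(p-1)/4}\cdot 2s$ lies in $\{1,2,4\}$ iff $(2s)^4\equiv 1\pmod p$, and equals $8$ iff $\ord_p(2s)=8$; together with Theorem~\ref{t:supersingular} for $p\equiv 3\pmod 4$ this correctly reduces everything to the multiplicative order of $2s$ modulo $p$. But at that point the paper simply cites Theorems 1, 2 and 3 of \cite{CosgraveDilcher}, which are precisely the statements ``$\ord_p(2s)\in\{1,2,4\}$ only for $p=5$'' and ``$\ord_p(2s)=8$ iff $p=a_k^2+a_{k+1}^2$ for the recurrence $a_{k+2}=4a_{k+1}-a_k$.'' You instead try to prove these from scratch, and neither part goes through.

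Concretely, for (a) your ``finite check'' is not finite. From $(2s)^4\equiv 1$ and $4s^2\equiv -4t^2\pmod p$ one gets $p\mid (4s^2-1)$ or $p\mid(4t^2-1)$, and since $0<4s^2-1<4p$ the surviving case is $4s^2-1=3p$, i.e.\ the Pell equation $s^2-3t^2=1$ (and symmetrically $t^2-3s^2=1$), which has infinitely many solutions; the resulting candidates $p=4t^2+1$ form an infinite sequence ($65$, $12545$, \dots), and showing that none of them beyond $p=5$ is prime requires an extra argument (e.g.\ a periodicity of the Pell solutions modulo $5$), not a hand check of finitely many cases. For (b) you give only a plan, and you yourself flag the decisive step --- identifying exactly which normalized pairs $(s,t)$ give $\ord_p(2s)=8$ and matching them with the recurrence --- as an unresolved ``main obstacle.'' This is the entire nontrivial content of the corollary, and it is what the citation to \cite{CosgraveDilcher} supplies. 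As written, the proposal establishes the (correct, and same-as-the-paper) reduction to $\ord_p(2s)$ but not the corollary itself.
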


\begin{proof}
 Note that $\ord_p \left( (-D)^{\frac{p-1}{4}} \right) |4$. Thus:
    \begin{itemize}
     \item $d_p(E) \in \{ 1, 2, 4 \}$ implies that $\ord_p(2s) \in \{1, 2, 4\}$,
     \item $d_p(E) = 8$ if and only if $\ord_p(2s) = 8$.
    \end{itemize}
 The proof follows now from Theorems 1, 2 and 3 in \cite{CosgraveDilcher}. 
\end{proof}
Finally we investigate the case of curves with multiplicative reduction.
\begin{Theorem}\label{t:multiplicative}
  Let $E = E_{A, B}$ be an elliptic curve over $\QQ_p$ with multiplicative reduction. Then:
  \begin{equation*}
    d_p(E) =
    \begin{cases}
      p-1, & \text{ if } \, j(E) \not \in \QQ_p^p\\
      1, & \text{ if } \, j(E) \in \QQ_p^p, \gamma(E/\QQ_p) \in \QQ_p^2\\
      2, & \text{ if } \, j(E) \in \QQ_p^p, \gamma(E/\QQ_p) \not \in \QQ_p^2,
    \end{cases}
  \end{equation*}
  where $\gamma = -2A/B$ and $\QQ_p^2$, $\QQ_p^p$ are the sets of squares and $p$-th powers in $\QQ_p$ respectively.
\end{Theorem}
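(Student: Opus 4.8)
The plan is to use the Tate curve uniformization. Since $E/\QQ_p$ has multiplicative reduction, there is a Tate parameter $q \in \QQ_p^\times$ with $v(q) > 0$, such that over $\QQ_p^{ur}$ (or over $\QQ_p$ itself in the split case) one has $E \cong E_q$ where $E_q(\ol\QQ_p) = \ol\QQ_p^\times / q^{\ZZ}$. More precisely, in the non-split case $E$ is the quadratic twist of the split Tate curve $E_q$ by the unramified quadratic character; this twist is governed by the class of $\gamma = -2A/B$ in $\QQ_p^\times/(\QQ_p^\times)^2$ (I would first check, from the standard formulas relating the Tate curve Weierstrass coefficients $a_4(q), a_6(q)$ to $A,B$, that the twisting element is indeed $\gamma$ up to squares, and that $v(j(E)) = -v(q)$ so $j(E) \in \QQ_p^p$ is equivalent to $p \mid v(q)$). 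The $p$-torsion of the split Tate curve is generated by $\zeta_p$ (a primitive $p$-th root of unity) and by $q^{1/p}$, i.e. $E_q[p] = \langle \zeta_p \rangle \oplus \langle q^{1/p}\rangle$ inside $\ol\QQ_p^\times/q^\ZZ$.

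Next I would compute the $p$-degree of the split Tate curve $E_q$ directly from this description. A nonzero $p$-torsion point is represented by $\zeta_p^a (q^{1/p})^b$ with $(a,b) \not\equiv (0,0) \pmod p$, and the field it generates is $\QQ_p(\zeta_p, q^{b/p})$. If $p \nmid b$ this field contains $\QQ_p(\zeta_p)$, hence has degree divisible by $p-1$; the extension $\QQ_p(\zeta_p, q^{1/p})/\QQ_p(\zeta_p)$ is trivial precisely when $q \in (\QQ_p(\zeta_p)^\times)^p$, and by a Kummer/norm argument this holds iff $q \in (\QQ_p^\times)^p$ (using that $p \nmid [\QQ_p(\zeta_p):\QQ_p]$ and $\mu_p \subset \QQ_p(\zeta_p)$), i.e. iff $p \mid v(q)$ and the unit part of $q$ is a $p$-th power — but the unit part is automatically a $p$-th power in $\ZZ_p^\times$ when its reduction is, which one arranges, so effectively $j(E) \in \QQ_p^p$. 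So: if $j(E) \notin \QQ_p^p$ then every nonzero $p$-torsion point has field of degree divisible by $p-1$, and $\QQ_p(\zeta_p)$ itself (the point $\zeta_p$) realizes degree exactly $p-1$, giving $d_p = p-1$. If $j(E) \in \QQ_p^p$, then the point $q^{1/p}$ (with $q$ chosen in its class to be a genuine $p$-th power up to the obvious adjustments) already lies in $\QQ_p$ or a degree-$2$ extension, and we get to the split/nonsplit dichotomy.

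For the case $j(E) \in \QQ_p^p$: in the split multiplicative case $E = E_q$ with $q \in (\QQ_p^\times)^p$, the point $q^{1/p} \in \QQ_p^\times/q^\ZZ$ is $\QQ_p$-rational, so $d_p(E) = 1$. In the non-split case $E$ is the unramified quadratic twist, so $E[p]$ as a Galois module is the twist of $E_q[p]$; the line spanned by $q^{1/p}$ in $E_q[p]$ is Galois-stable (it is the kernel of reduction, the connected part), and after twisting, the corresponding point generates the unramified quadratic extension $\QQ_p(\sqrt{u})$ where $u$ is the non-square unit cutting out the twist — this is exactly the class of $\gamma$. Thus $d_p(E) = 1$ if $\gamma \in (\QQ_p^\times)^2$ (split, or twist by a square, i.e. really split) and $d_p(E) = 2$ otherwise, once one checks no other $p$-torsion point does better: any point involving $\zeta_p$ nontrivially has degree divisible by $p-1 > 2$ since $p \neq 2,3$.

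The main obstacle I expect is bookkeeping the quadratic twist carefully: pinning down that the twisting character corresponds precisely to $\gamma = -2A/B \bmod (\QQ_p^\times)^2$ (not merely to some unit depending on the model), and correctly tracking how twisting by an unramified quadratic character acts on the two natural lines $\langle\zeta_p\rangle$ and $\langle q^{1/p}\rangle$ in $E[p]$ — in particular confirming that it is the "$q^{1/p}$-line" (the connected/toric part) that yields the small field after twisting, while the "$\zeta_p$-line" always drags in $\QQ_p(\zeta_p)$ regardless of the twist. A secondary point needing care is the Kummer-theoretic claim that $q \in (\QQ_p(\zeta_p)^\times)^p \iff q \in (\QQ_p^\times)^p$, which follows from $\gcd(p, [\QQ_p(\zeta_p):\QQ_p]) = 1$ together with a norm argument, but should be stated explicitly.
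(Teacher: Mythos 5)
Your overall strategy is the same as the paper's: uniformize by the Tate curve $E_q$, treat the non-split case as the quadratic twist by $\gamma=-2A/B$ over $L=\QQ_p(\sqrt{\gamma})$, and minimize degrees over the points $\Psi(\zeta_p^a q^{b/p})$. However, two of your intermediate assertions are false as stated. First, the field generated by the point corresponding to $z=\zeta_p^a q^{b/p}$ is \emph{not} $\QQ_p(\zeta_p,q^{b/p})$; it is $L(z)$ (and $\QQ_p(z)$ in the split case), because $\sigma$ fixes the class of $z$ modulo $q^{\ZZ}$ iff $\sigma(z)=z$ (one has $\sigma(z)/z\in\mu_p$ and $\mu_p\cap q^{\ZZ}=\{1\}$). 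For $a,b\not\equiv 0$ and $q\notin\QQ_p^p$ this is a degree-$p$ field not containing $\zeta_p$, so your claim that every point with $p\nmid b$ has degree divisible by $p-1$ fails. The final answer survives only by the accident that $p>p-1$, so these points never beat $\Psi(\zeta_p)$; a correct argument should record their degree as $[\QQ_p(z):\QQ_p]\cdot[L:\QQ_p]\in\{1,2,p-1,p,2(p-1),2p\}$ and take the minimum, as the paper does. Relatedly, in the non-split case the point $\Psi(\zeta_p)$ does not ``drag in $\QQ_p(\zeta_p)$'': because of the twist, $\sigma$ fixes it iff either $\chi(\sigma)=1$ and $\sigma(\zeta_p)=\zeta_p$, or $\chi(\sigma)=-1$ and $\sigma(\zeta_p)=\zeta_p^{-1}$, so its field is $\QQ_p\bigl(\sqrt{\gamma}\,(\zeta_p-\zeta_p^{-1})\bigr)$, of degree exactly $p-1$ but not containing $\zeta_p$. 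You need this twisted analysis to rule out the degree dropping below $p-1$ there, and also to see why the ``$\sigma(z)\equiv z^{-1}$'' branch can only occur for points in $\mu_p$.

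Second, your criterion for $j(E)\in\QQ_p^p$ is wrong: a unit $c\in\ZZ_p^\times$ is a $p$-th power iff $c^{p-1}\equiv 1\pmod{p^2}$, not iff its reduction mod $p$ is a $p$-th power (every element of $\FF_p^\times$ is a $p$-th power, so your version would make every unit a $p$-th power). Consequently $j(E)\in\QQ_p^p$ is strictly stronger than $p\mid v(j(E))$, and the equivalence $q\in\QQ_p^p\Leftrightarrow j(E)\in\QQ_p^p$ does not follow from matching valuations alone. It is still true, but the correct reason is the one the paper uses: from $q=g(1/j)=\tfrac{1}{j}\bigl(1+744/j+\cdots\bigr)$, if $p\mid v(1/j)$ and $v(1/j)>0$ then $v(1/j)\ge p\ge 2$, so the correction factor lies in $1+p^2\ZZ_p\subset(\ZZ_p^\times)^p$ and $q$, $1/j$ differ by a $p$-th power. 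With these two points repaired (and the twist normalization $\gamma=-2A/B$ checked against the Tate model, as you already flag), your argument matches the paper's proof.
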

\begin{proof}
  Let $L = \QQ_p(\sqrt{\gamma})$ and $x = 1/j(E)$. Then $E/\QQ_p$ is isomorphic over $L$ to the Tate curve $E_q$, where $q$ is given by the power series:
  \begin{equation}\label{e:g_power_series}
    g(x) = x + 744 x^2 + \ldots \in \ZZ[[x]] 
  \end{equation}
  (cf. \cite[Theorem V.5.3]{Silverman1994}). In other words, there is an isomorphism of\\
  $\Gal(\ol{\QQ}_p/L)$\nobreakdash-modules $\Psi : \ol{\QQ}_p^{\times}/\langle q \rangle \to E(\ol{\QQ}_p)$ satisfying:
  \begin{equation}\label{e:twist}
   \Psi \circ \sigma = \chi(\sigma) \cdot \Psi \qquad \text{for every } \sigma \in \Gal(\ol{\QQ}_p/\QQ_p),
  \end{equation}
  where $\chi:\Gal(\ol{\QQ}_p/\QQ_p) \to \{ \pm 1 \}$ denotes the quadratic character associated to $L/\QQ_p$. Observe that for $L = \QQ_p$, the character $\chi$ is trivial. Let $P = \Psi(z) \in E[p]$, $P \neq \mc O$, where:
  \begin{equation*}
   z^p = q^j \qquad \text{for some $j \in \{0, 1, \ldots, p-1\}$.}
  \end{equation*}
  Note that for $\sigma \in \Gal(\ol{\QQ}_p/\QQ_p)$ we have $\sigma \in \Gal(\ol{\QQ}_p/\QQ_p(P))$ if and only if: 
  \begin{equation*}
    \Psi(\sigma(z)) = \sigma(\Psi(z)),  
  \end{equation*}
  which is equivalent by (\ref{e:twist}) to an alternative:
  \begin{equation}
    \sigma \in \Gal(\ol{\QQ}_p/L(z))
    \quad \text{ or } \quad
    \left\{
    \begin{array}{ccc}
      \sigma(z) &\equiv& z^{-1} \pmod{q^{\ZZ}}\\
      \sigma &\not \in& \Gal(\ol{\QQ}_p/L)
    \end{array}
    \right.. \label{e:alternative}
  \end{equation}
  However, $\sigma(z) \equiv z^{-1} \pmod{q^{\ZZ}}$ implies that
  \begin{equation*}
    q^{2j} = (z \cdot \sigma(z))^p \equiv 1 \pmod{q^{p\ZZ}},
  \end{equation*}
  which is possible, if and only if, $j = 0$. Thus, if $j \neq 0$, then we must have $\QQ_p(P) = L(z)$, by the equality of absolute Galois groups. In order to finish the proof we consider two cases:\\

  \textbf{Case I: } Assume that $j \neq 0$. Note that $\QQ_p(z)/\QQ_p$ is totally ramified and $L/\QQ_p$ is unramified, thus $L \cap \QQ_p(z) = \QQ_p$ and:
  \begin{align*}
    [\QQ_p(P):\QQ_p] &= [L(z):\QQ_p] = [\QQ_p(z):\QQ_p] \cdot [L:\QQ_p] = \\
    &=
    \begin{cases}
      1,  & z \in \QQ_p, \, \gamma \in \QQ_p^2,\\
      2,  & z \in \QQ_p, \, \gamma \not \in \QQ_p^2,\\
      \ge p,  & z \not \in \QQ_p.
    \end{cases}
  \end{align*}
  The condition $z \in \QQ_p$ may hold, if and only if, $q \in \QQ_p^p$. Using the equality:
  \begin{equation*}
    \QQ_p^p = \{ p^{pn} \cdot c: \quad n \in \ZZ, \quad  c \in \ZZ_p^{\times}, \quad  c^{p-1} \equiv 1 \pmod{p^2} \},
  \end{equation*}
  which follows by the Hensel lemma, and the formula (\ref{e:g_power_series}), we see that $q \in \QQ_p^p$, if and only if, $j(E) \in \QQ_p^p$.\\
  
  \textbf{Case II:} Assume that $j = 0$, without loss of generality $z = \zeta_p$. In this case the alternative (\ref{e:alternative}) is easily seen to be equivalent to the condition:
  \begin{equation*}
    \sigma(\sqrt{\gamma} \cdot (\zeta_p - \zeta_p^{-1})) = \sqrt{\gamma} \cdot (\zeta_p - \zeta_p^{-1}).
  \end{equation*}
  Thus $\QQ_p(P) = \QQ_p(\sqrt{\gamma} \cdot (\zeta_p - \zeta_p^{-1}))$, which yields that $[\QQ_p(P):\QQ_p] = p-1$.\\
  The proof is complete.
\end{proof}

\address{Graduate School, Faculty of Mathematics and Computer Science\\
Adam Mickiewicz University\\
Umultowska 87, 61-614 Pozna\'{n}, POLAND\\
\email{jgarnek@amu.edu.pl}\\
}

\end{document}